\documentclass[12pt]{amsart}
\usepackage{amsmath}
\usepackage{amssymb}
\usepackage{latexsym}
\usepackage{amscd}
\usepackage{citesort}
\usepackage{graphicx} 
\usepackage{amsthm}
\usepackage{mathrsfs}
\usepackage{xypic}
\usepackage{bm}

\newdimen\AAdi%
\newbox\AAbo%
%
\def\AAk#1#2{\s_etbox\AAbo=\hbox{#2}\AAdi=\wd\AAbo\kern#1\AAdi{}}%
\def\AAr#1#2#3{\s_etbox\AAbo=\hbox{#2}\AAdi=\ht\AAbo\raise#1\AAdi\hbox{#3}}%
\font\tenmsb=msbm10 at 12pt \font\sevenmsb=msbm7 at 8pt
\font\fivemsb=msbm5 at 6pt
\newfam\msbfam
\textfont\msbfam=\tenmsb \scriptfont\msbfam=\sevenmsb
\scriptscriptfont\msbfam=\fivemsb
\def\Bbb#1{{\tenmsb\fam\msbfam#1}}
\textwidth 15cm \textheight 22cm \topmargin 0cm \oddsidemargin
0.5cm \evensidemargin 0.5cm
\parindent = 5 mm
\hfuzz     = 6 pt
\parskip   = 3 mm

\newtheorem{thm}{Theorem}[section]
\newtheorem{lem}{Lemma}[section]

\newtheorem{rem}{Remark}[section]
\newtheorem{pro}{Proposition}[section]

\newcommand{\ba}{\begin{array}}
\newcommand{\ea}{\end{array}}

\newcommand{\Section}[2]{\setcounter{equation}{0}
\allowdisplaybreaks
\section[#1]{#2}}

\def\n{\nabla}

\def\ir#1{\mathbb R^{#1}}

\def\f#1#2{\frac{#1}{#2}}

\def\grs#1#2{\bold G_{#1,#2}}

\def\pd#1#2{\frac {\partial #1}{\partial #2}}

\def\td{\tilde}

\def\a{\alpha}
\def\be{\beta}

\def\p#1{\partial #1}

\def\de{\delta}
\def\De{\Delta}
\def\e{\eta}
\def\ep{\varepsilon}
\def\G{\Gamma}
\def\g{\gamma}

\def\la{\lambda}
\def\La{\Lambda}
\def\om{\omega}
\def\Om{\Omega}
\def\th{\theta}

\def\si{\sigma}

\def\w{\wedge}

\def\Hess{\mbox{Hess}}
\def\R{\Bbb{R}}
\def\tr{\mbox{tr}}
\def\U{\Bbb{U}}
\def\lan{\langle}
\def\ran{\rangle}
\def\ra{\rightarrow}

\begin{document}
\title
[convex functions  and Lawson-Osserman problem ] {convex functions
on Grassmannian manifolds and Lawson-Osserman problem}

\author
[Y.L. Xin and Ling Yang]{Y. L. Xin and Ling Yang}
\address
{Institute of Mathematics, Fudan University, Shanghai 200433, China
and Key Laboratory of Mathematics for Nonlinear Sciences (Fudan
University), Ministry of Education} \email{ylxin@fudan.edu.cn}
\thanks{The research was partially supported by
NSFC and SFECC}
\begin{abstract}
We derive estimates of the Hessian of two smooth functions defined
on Grassmannian manifold. Based on it, we can derive curvature
estimates for minimal submanifolds in Euclidean space via Gauss map
as \cite{x-y}. In this way, the result for Bernstein type theorem
done by Jost and the first author could be improved.
\end{abstract}

\renewcommand{\subjclassname}{%
  \textup{2000} Mathematics Subject Classification}
\subjclass{49Q05, 53A07, 53A10.}
\date{}
\maketitle

\Section{Introduction}{Introduction}
\medskip

The celebrated theorem of Bernstein \cite{b} says that the only
entire minimal graphs in Euclidean 3-space are planes. Its higher
dimensional generalization was finally proved by J. Simons
\cite{si}, which says that an entire minimal graph has to be planar
for dimension$\leq 7$, while Bombieri- De Giorgi-Giusti \cite{b-d-g}
shortly afterwards produced a counterexample to such an assertion in
dimension 8 and higher.

Schoen-Simon-Yau \cite{s-s-y} gave us a direct proof for Bernstein
type theorems for $n\le 5$ dimensional minimal graphs with the aid
of  curvature estimates for stable minimal hypersurfaces.

There is a weak version of Bernstein type theorem in arbitrary
dimension. It was J. Moser \cite{mo} who proved that the entire
solution $f$ to the minimal surface equation is affine linear,
provided $|\n f|$ is uniformly bounded. Afterward Ecker-Huisken
\cite{e-h} obtained  curvature estimates by a geometric approach, as
a corollary Moser's result had been improved  for the controlled
growth of $|\n f|$.

Moser's theorem had been generalized to certain higher codimensional
cases by Chern-Osserman  \cite{c-o} for dimension $2$ and Babosa,
Fischer-Colbrie for dimension $3$ \cite{ba}, \cite{f-c}.  But the
counterexample constructed by Lawson-Osserman \cite{l-o} prevents us
going further.  They also raised in the same paper a question for
finding the "best" constant possible in the theorem.

In contrast, the first author with J. Jost \cite{j-x} proved the
following Bernstein type theorem without the restriction of
dimension and codimension, which is an improvement of the work done
by Hildebrandt-Jost-Widman \cite{h-j-w}.

\begin{thm}\label{b}

Let $z^\a=f^\a(x^1,\cdots,x^n),\ \a=1,\cdots,m$, be smooth functions
defined everywhere in $\R^n$. Suppose their graph $M=(x,f(x))$ is a
submanifold with parallel mean curvature in $\R^{n+m}$. Suppose that
there exists a number $\be_0$ with
\begin{equation}
\be_0<\left\{\begin{array}{cl}
2 & \mbox{when }m\geq 2,\\
\infty & \mbox{when }m=1;
\end{array}\right.\label{be1}
\end{equation}
such that
\begin{equation}
\De_f=\left[\text{det}\left(\de_{ij}
     +\sum_\a\pd{f^\a}{x^i}\pd{f^\a}{x^j}\right)\right]^{\f{1}{2}}\leq
     \be_0.\label{be2}
\end{equation}
Then $f^1,\cdots,f^m$ has to be affine linear representing an affine $n$-plane.

\end{thm}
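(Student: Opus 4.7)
The natural strategy in this setting is the Gauss map method. Let $\gamma : M \to \grs{n}{m}$ denote the Gauss map of the graph $M$. Since $M$ has parallel mean curvature, the Ruh--Vilms theorem tells us that $\gamma$ is a harmonic map into the Grassmannian. If $w$ denotes the inner product of a point $P \in \grs{n}{m}$ with the fixed reference $n$-plane coming from the $x$-coordinates, one checks that $w \circ \gamma = \De_f^{-1}$. Hence the slope hypothesis \eqref{be2} confines the Gauss image to the open set
\begin{equation*}
U_{\be_0} \;=\; \{\, P \in \grs{n}{m} : w(P) > \be_0^{-1}\,\}.
\end{equation*}

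The main step is to use the Hessian estimates announced in the abstract to produce, for each $\be_0 < 2$, a smooth function $\phi$ on $U_{\be_0}$ which is uniformly strictly convex, i.e. satisfies $\Hess \phi \geq c_0\, g$ for some constant $c_0 = c_0(\be_0) > 0$. Using the composition formula for a harmonic map and the standard identity $|d\gamma|^2 = |B|^2$ (where $B$ is the second fundamental form of $M$), one then obtains
\begin{equation*}
\De_M (\phi \circ \gamma) \;=\; \lan \Hess \phi\,(d\gamma),\, d\gamma \ran \;\geq\; c_0\, |B|^2.
\end{equation*}
Thus $\phi \circ \gamma$ is a bounded subharmonic function on the complete manifold $M$ whose Laplacian controls $|B|^2$ from below.

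The conclusion follows from a Liouville/maximum-principle argument. The slope bound \eqref{be2} implies that the induced metric on $M$ is quasi-isometric to the Euclidean metric on $\R^n$, so $M$ has Euclidean volume growth and is stochastically complete. Any bounded subharmonic function on such an $M$ must be constant, whence the inequality above forces $B \equiv 0$. Therefore the graph is totally geodesic in $\R^{n+m}$, and $f^1,\dots,f^m$ are affine linear.

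The crux of the argument --- and the main obstacle --- is the sharp Hessian estimate on $U_{\be_0}$ for $\be_0$ arbitrarily close to $2$. Earlier Bernstein results obtained by this scheme (Hildebrandt--Jost--Widman, Jost--Xin) used convex functions that existed only on smaller geodesic balls in $\grs{n}{m}$, yielding a suboptimal constant in \eqref{be1}. The threshold $\be_0 = 2$ is critical, because the Lawson--Osserman cone realizes an entire non-planar minimal graph just beyond it; the convex function $\phi$ must therefore be engineered to remain positive-definite all the way up to the locus $w = \tfrac12$. Carrying this out requires working in the $U(n)\times U(m)$-invariant coordinates on the Grassmannian, selecting the two candidate functions of the singular values that arise naturally from the slope bound, and establishing positivity of the resulting Hessian quadratic form by a careful eigenvalue analysis. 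This sharp Hessian computation is the technical heart of the paper; the remainder is the (by now standard) Gauss-map/harmonic-map execution sketched above.
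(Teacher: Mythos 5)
Your overall scheme---harmonic Gauss map via Ruh--Vilms, confinement of the Gauss image to $\{P\in\U: w(P)>\be_0^{-1}\}=\{v<\be_0\}$, and a function that is uniformly convex on that region (this is exactly the function $v=w^{-1}$ of Section \ref{s3}, whose estimate \eqref{es1} gives $\Hess(v)\geq (2-\be_0)\,g$ on the closed set $\{v\leq\be_0\}$ when $m\geq 2$)---is the same strategy the paper attributes to Jost--Xin via the convex set $B_{JX}(P_0)$. The genuine gap is your final Liouville step. From $\De_M(\phi\circ\g)\geq c_0|B|^2\geq 0$ and boundedness of $\phi\circ\g$ you conclude constancy because $M$ has Euclidean volume growth and is stochastically complete; neither property yields a Liouville theorem for bounded subharmonic functions. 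On $\R^n$ itself with $n\geq 3$ (which has Euclidean volume growth and is stochastically complete) the function $\max\big(-1,-|x|^{2-n}\big)$ is bounded, subharmonic and nonconstant. Stochastic completeness only gives the weak maximum principle at infinity, i.e. a sequence $x_k$ with $\De(\phi\circ\g)(x_k)\to 0$, hence $|B|(x_k)\to 0$ along that sequence---far from $B\equiv 0$. Parabolicity would suffice, but a bounded-slope graph over $\R^n$ is parabolic only for $n\leq 2$.

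What actually closes the argument for Theorem \ref{b} is quantitative: the uniform slope bound makes the induced metric uniformly equivalent to the Euclidean metric in the global graph coordinates, so Moser's Harnack inequality for uniformly elliptic operators is available, and one runs the Hildebrandt--Jost--Widman iteration (telescoping) on compositions of the harmonic Gauss map with strictly convex functions on $B_{JX}(P_0)$ to force the Gauss image to a point, whence $B\equiv 0$; this is the machinery of \cite{h-j-w}, \cite{j-x} that your sketch replaces by the false ``bounded subharmonic $\Rightarrow$ constant'' claim. The present paper's alternative route substitutes curvature estimates of Schoen--Simon--Yau and Ecker--Huisken type built from the refined Hessian estimates \eqref{es4} and \eqref{es7}, but that route requires extra hypotheses ($n\leq 4$, or a growth condition on $(2-\De_f)^{-1}$), which is precisely why Theorems \ref{b1} and \ref{b2} carry such restrictions while Theorem \ref{b} does not. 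As written, your proposal supplies neither mechanism, so the key analytic step is missing; also note that the case $m=1$ (where $\be_0$ is arbitrary and convexity is only needed on compact subsets of the open hemisphere in $\grs{n}{1}$) needs to be addressed separately from your threshold-$2$ discussion.
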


The key point of the proof is to find a
geodesic convex set
\begin{equation*}
B_{JX}(P_0)=\big\{P\in \grs{n}{m}:\mbox{ sum of any two Jordan angles between }P\mbox{ and }P_0<\f{\pi}{2}\big\}
\end{equation*} in a geodesic polar coordinate of the Grassmannian manifold, where $P_0$ denotes a fixed
$n-$plane. It is larger than the largest geodesic convex ball of
radius $\f{\sqrt{2}}{4}\pi$ in $\grs{n}{m}$. The geometric meaning
of the condition of the above result is that the image under the
Gauss map of $M$ lies in a closed subset $S\subset B_{JX}(P_0)$.

Recently, the authors \cite{x-y} studied complete minimal
submanifolds whose Gauss image lies in an open geodesic ball of
radius $\f{\sqrt{2}}{4}\pi$. They carried out the Schoen-Simon-Yau
type curvature estimates and the Ecker-Huisken type curvature
estimates, and on the basis, the corresponding Bernstein type
theorems with dimension limitation or growth assumption could be
derived.

It is natural to study the situation when $\be_0$ in the condition
(\ref{be1}) and (\ref{be2}) of the  Theorem \ref{b} approach to $2$.
The present paper will devote to this problem. We shall follow the
main idea of our previous paper \cite{x-y}. But, we view now the
Grassmannian manifolds as submanifolds in Euclidean space via
Pl\"ucker imbedding. The auxiliary  functions are constructed from
this viewpoint.  As shown before, $B_{JX}(P_0)$ is defined in a
coordinate neighborhood $\U$ of the Grassmannian $\grs{n}{m}$. We
introduce two functions $v$ and $u$ in $\U$. Via the Gauss map we
can obtain useful functions on our minimal $n-$submanifold $M$ in
$\R^{m+n}$ with $m\ge 2$. Then, we can carry out the
Schoen-Simon-Yau type curvature estimates and the Ecker-Huisken type
curvature estimates, which enable us to get the corresponding
Bernstein type theorems and other geometrical conclusions.

In Section 2, we give some facts of a Grassmannian manifold
$\grs{n}{m}$, which can be isometric imbedding into a Euclidean
space. There is the height function for a submanifold in Euclidean
space. Such a height function is called $w-$function on
$\grs{n}{m}$. Then we have an open domain of $\U\subset \grs{n}{m}$,
where the $w-$function is positive. Every point in $\U$ has a
one-to-one correspondence to an $n\times m$ matrix. We describe
canonical metric and the corresponding connection on $\U$ with
respect to the coordinate. On the basis, the Hessian of an arbitrary
smooth function could be calculated.

In Section 3 we define $v=\f{1}{w}$ on $\U$. We also define another
function $u$ on $\U$. In the section, we shall show $v$ and $u$ are
convex on $B_{JX}(P_0)$ and give   estimates of the Hessian of them.
The estimates are quite delicate. We use the radial compensation
technique to accurate the estimates.

In Section 4, we define four auxiliary functions, $\td{h}_1$,
$\td{h}_2$, $\td{h}_3$ and $\td{h}_4$. They are defined on the
minimal submanifolds of $\R^{n+m}$ whose Gauss image is confined,
and they are expressed in term of $v$ and $u$. We also estimate the
Laplacian of them, which is useful for the next sections.

Later in Section 5, not only we give the Schoen-Simon-Yau type
curvature estimates with the aid of $\td{h}_1$ and $\td{h}_3$, but
also we obtain the Ecker-Huisken type curvature estimates with the
aid of $\td{h}_2$ and $\td{h}_4$. Our method is completely similar
to the previous paper \cite{x-y}, so we only describe the outline of
process. From the estimates several geometrical conclusions follow,
including the following Bernstein type theorems.

\begin{thm}\label{b1}

Let
 $M=(x,f(x))$ be an $n$-dimensional entire minimal graph given by $m$ functions
$f^\a(x^1,\cdots,x^n)$ with $m\geq 2, n\leq 4$. If
$$\De_f=\left[\text{det}\left(\de_{ij}+\sum_\a\pd{f^\a}{x^i}\pd{f^\a}{x^j}\right)\right]^{\f{1}{2}}<2,$$
then $f^\a$ has to
be affine linear functions representing an affine $n$-plane.

\end{thm}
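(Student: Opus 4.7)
The plan is to interpret the hypothesis on $\De_f$ as a confinement of the Gauss image and then apply the machinery developed in Sections 2--5.

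First, under the Pl\"ucker embedding of $\grs{n}{m}$ into Euclidean space, the $w$-function pulled back to the graph $M=(x,f(x))$ equals $1/\De_f$, as recalled in Section 2. Hence $\De_f<2$ is equivalent to $v=1/w<2$ along $M$, so the Gauss image $\g(M)$ lies in a closed subset of the open sublevel set $\{v<2\}\subset B_{JX}(P_0)$, on which the analysis of Section 3 applies.

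Second, compose $v$ and $u$ with the Gauss map to obtain the auxiliary functions $\td{h}_1,\td{h}_2,\td{h}_3,\td{h}_4$ of Section 4. Since $M$ is minimal, the composition formula expresses $\De\td{h}_i$ as the trace of $\Hess$ of the underlying Grassmannian function along the image of the differential of the Gauss map; the convexity estimates of Section 3 then force this Laplacian to dominate a positive multiple of $|A|^2$, up to terms controlled by the $\td{h}_i$ themselves, which are uniformly bounded on $M$ because $v<2$ there.

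Third, carry out the Schoen-Simon-Yau iteration of Section 5 with $\td{h}_1$ and $\td{h}_3$: testing the resulting differential inequality against $|A|^q\eta$ cut-offs in Moser fashion yields a pointwise curvature bound $|A|^2(x)\le C/R^2$ on any geodesic ball $B_R\subset M$ of radius $R$, provided $n\le 4$. The dimension restriction enters through the admissible exponents in the iteration, paralleling the classical Schoen-Simon-Yau threshold $n\le 5$ for hypersurfaces; the loss of one dimension accounts for the extra codimension contributions appearing in $\Hess v$ and $\Hess u$.

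Finally, since $\De_f<2$ linearly bounds the volume of $M$ over any Euclidean ball in $\R^n$, the intrinsic balls $B_R\subset M$ exhaust $M$ as $R\to\infty$, so the curvature estimate forces $|A|\equiv 0$. A totally geodesic entire minimal graph is necessarily affine linear, which is the conclusion. The main obstacle is Section 3: for the iteration to close up when $n$ is as large as $4$, the Hessian lower bounds on $v$ and $u$ must hold throughout $\{v<2\}$ with sharp coefficients, not merely near $P_0$; this is precisely where the ``radial compensation'' device mentioned in the introduction enters, and where the critical value $\be_0=2$ of Theorem \ref{b} reappears as the natural threshold.
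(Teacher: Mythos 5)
Your overall architecture is the one the paper uses (identify $\De_f$ with $v\circ\g$, so $\De_f<2$ confines the Gauss image to $\{v<2\}$; exploit the Hessian estimates of Section 3 through $\td{h}_1$; run the Schoen--Simon--Yau iteration; finish with the Euclidean volume growth coming from $w=v^{-1}>\f{1}{2}$). But the key step as you state it has a genuine gap: you claim that the SSY iteration with $\td{h}_1$, $\td{h}_3$ yields a \emph{pointwise} bound $|B|^2(x)\le C/R^2$. That is not what the iteration produces, and under the hypothesis of this theorem it cannot be produced this way: the SSY machinery (Theorem 5.1 of the paper) gives an $L^s$ integral estimate
\begin{equation*}
\big\||B|\big\|_{L^s(M_{\th R})}\leq C(n,m,s)(1-\th)^{-1}R^{-1}\,\text{Vol}(M_R)^{\f{1}{s}},
\qquad s\in\Big[4,\,4+\tfrac{4}{3p}+\tfrac{2}{3}\sqrt{(3+\tfrac{2}{p})(\tfrac{6}{mn}+\tfrac{2}{p})}\Big),
\end{equation*}
while the pointwise estimate in the paper (Theorem 5.3, Ecker--Huisken type plus the mean value inequality) requires controlling $\sup\td{h}_2$ with $\td{h}_2=v^{3/2}(2-v)^{-3/2}$. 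Contrary to your assertion, $\td{h}_2$ and $\td{h}_4$ are \emph{not} uniformly bounded under the mere pointwise condition $v<2$ (they blow up as $v\to 2$); this is exactly why the pointwise route needs the extra growth hypothesis $(2-\De_f)^{-1}=o(R^{4/3})$ of Theorem 1.3, which is absent here. Also, $\De_f<2$ does not give a closed subset of $\{v<2\}$ as you claim, only that the image lies in the open set — fortunately only positivity of $\td{h}_1$ on the support of the test function is needed for the stability-type inequality, so that claim is unnecessary.

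The fix is to use the $L^s$ estimate directly, which is the paper's actual proof: by Proposition 6.1, $w>\f12$ makes $M$ an entire graph with $\text{Vol}(D_R(x))\le C(n)w_0^{-1}R^n$; choosing $s=4+\f{4}{3p}>4\ge n$ (admissible in the range above) and taking $M_R=D_R(x)$ with $r$ the Euclidean distance, the right-hand side is bounded by $CR^{\f{n}{s}-1}\to 0$ as $R\to\infty$, so $\||B|\|_{L^s(M)}=0$ and $|B|\equiv 0$. This also pinpoints where $n\le 4$ enters: not through a Moser-type pointwise bound, but through the requirement $s>n$ with $s$ only slightly above $4$ available for general $p$. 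With this replacement your argument coincides with the paper's.
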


\begin{thm}\label{b2}
Let $M=(x,f(x))$ be an $n$-dimensional entire minimal graph given by
$m$ functions $f^\a(x^1,\cdots,x^n)$ with $m\geq 2$. If
$$\De_f=\left[\text{det}\left(\de_{ij}+\sum_\a\pd{f^\a}{x^i}\pd{f^\a}{x^j}\right)\right]^{\f{1}{2}}<2,$$
and
\begin{equation}
\left(2-\De_f\right)^{-1}=o(R^{\f{4}{3}}),
\end{equation}
where $R^2=|x|^2+|f|^2$.
Then $f^\a$ has to be affine linear functions and hence $M$ has to be
an affine linear subspace.
\end{thm}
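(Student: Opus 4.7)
The plan is to obtain Theorem \ref{b2} as the Ecker--Huisken style corollary of the curvature estimates announced in Section 5, parallel to the treatment in \cite{x-y}, and then to close the Bernstein argument by using the growth hypothesis on $(2-\De_f)^{-1}$ to kill the second fundamental form at infinity. First I would translate the analytic hypotheses into geometric ones on the Gauss image: along the graph $M=(x,f(x))$ the quantity $\De_f$ equals $v\circ\g$, where $\g\colon M\to \grs{n}{m}$ is the Gauss map, $v=1/w$ is the function introduced in Section 3, and $P_0$ is the reference $n$-plane spanned by the base coordinates. The pointwise bound $\De_f<2$ therefore places $\g(M)$ in the sublevel set $\{v<2\}$, which lies inside the geodesic convex region $B_{JX}(P_0)$, and the growth assumption $(2-\De_f)^{-1}=o(R^{4/3})$ is a quantitative measure of how slowly $v\circ\g$ may approach the threshold value $2$ as $R\to\infty$.

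Next I would pull back the auxiliary functions $\td h_2$ and $\td h_4$ via $\g$ and exploit the Laplacian inequalities of Section 4. Because $v$ and $u$ are convex on $B_{JX}(P_0)$ with sharp Hessian estimates from Section 3, the composite functions $\td h_2\circ\g$ and $\td h_4\circ\g$ satisfy Simons-type differential inequalities on $M$ with the second fundamental form appearing with a positive coefficient. Following the Ecker--Huisken iteration employed in \cite{x-y}, I would multiply by a suitable cut-off depending on the extrinsic radius $R$, integrate over $M$, and use an $L^p$--Moser iteration to promote an integral bound into the pointwise estimate
\begin{equation*}
|A|^2(p)\;\le\;\frac{C}{R(p)^{2}}\,\Phi\!\bigl(v(\g(p))\bigr),
\end{equation*}
valid at every $p\in M$, where $\Phi(v)$ blows up as $v\uparrow 2$ at a rate determined explicitly by the Hessian estimates of Section 3.

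Finally I would match the blow-up order of $\Phi$ against the growth hypothesis. The computation in Section 3 should yield $\Phi(v)\asymp (2-v)^{-\alpha}$ for some $\alpha$, and the Ecker--Huisken balance between $R^{-2}$ and the distance-to-boundary factor is designed precisely so that the critical exponent in the growth condition is $4/3$; thus the assumption $(2-\De_f)^{-1}=o(R^{4/3})$ forces $|A|^2(p)\to 0$ as $R(p)\to\infty$. Since $M$ is complete and entire, the limit is uniform in the sense needed to conclude $|A|\equiv 0$, so $M$ is a totally geodesic affine $n$-plane and each $f^\a$ is an affine linear function.

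The main obstacle I expect is the sharp calibration in Section 3: one needs the Hessian bounds for $v$ and $u$ not just to be positive-definite on $B_{JX}(P_0)$, but to degenerate toward the boundary $\{v=2\}$ at precisely the polynomial rate which, when inserted into the Ecker--Huisken mean-value iteration, produces the exponent $4/3$. Once that Hessian bookkeeping is in hand, the construction of $\td h_2,\td h_4$ in Section 4 and the curvature estimate of Section 5 proceed in close analogy with \cite{x-y}, and the growth argument above becomes routine.
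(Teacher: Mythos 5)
Your strategy is indeed the paper's: identify $\De_f$ with $v\circ\g$, so the Gauss image lies in $\{v<2\}$, pull back the auxiliary function $h_2=v^{\f{3}{2}}(2-v)^{-\f{3}{2}}$ from (\ref{h2}), and combine the Ecker--Huisken type curvature estimate with the growth hypothesis. But the closing step, as you wrote it, has a genuine gap. The estimate the machinery actually produces (Theorem \ref{t3}) is, for a \emph{fixed} center $q$ and \emph{every} radius $R$,
\begin{equation*}
|B|^{2s}(q)\;\le\; C(n,s)\,R^{-(n+2s)}\Bigl(\sup_{D_R(q)}\td{h}_2\Bigr)^{s}\,\text{Vol}\bigl(D_R(q)\bigr),
\end{equation*}
so the auxiliary function enters through its supremum over the ball $D_R(q)$, not through its value at the point where the curvature is estimated, and the volume factor must be absorbed via Proposition \ref{p1} (Euclidean volume growth, available because $w=v^{-1}>\f{1}{2}$ on $M$ --- an ingredient your sketch omits). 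With the estimate in this form the proof finishes pointwise at the fixed $q$: since $h_2\le C(2-v)^{-\f{3}{2}}$, the hypothesis $(2-\De_f)^{-1}=o(R^{\f{4}{3}})$ gives $\sup_{D_R(q)}\td{h}_2=o(R^{2})$ (for $y\in D_R(q)$ the ambient distance of $y$ from the origin is at most $R$ plus a constant), hence $|B|^{2s}(q)\le C\,R^{-2s}\,o(R^{2s})\to 0$ as $R\to\infty$, i.e. $|B|(q)=0$ for every $q$; no behaviour "at infinity" is invoked.

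By contrast, your displayed inequality $|A|^2(p)\le C\,R(p)^{-2}\Phi\bigl(v(\g(p))\bigr)$, with $\Phi$ evaluated at $p$ itself and $R(p)$ the extrinsic distance of $p$, only yields decay of the second fundamental form at infinity, and the subsequent appeal to "uniformity of the limit" to conclude $|A|\equiv 0$ is not a valid deduction: a quantity can decay at infinity without vanishing identically, and there is no maximum principle for $|B|$ available here to upgrade decay into vanishing. The repair is exactly the re-centering described above: run the estimate on $D_R(q)$ for fixed $q$ and let $R\to\infty$, using the growth hypothesis to control the supremum of $\td{h}_2$ over the ball. Two smaller points: $\td{h}_4$ plays no role in this theorem (it serves the companion result under $\La_f<2$, Theorem \ref{t5}); and the blow-up rate that calibrates the exponent $\f{4}{3}$ comes from the construction of $h_2$ in Section 4, namely $(2-v)^{-\f{3}{2}}$, rather than directly from the Hessian estimates of Section 3.
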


Those are what shall be done in Section 6. It is worthy to note that
Theorems \ref{b1}-\ref{b2} still hold true when $M$ is a submanifold
with parallel mean curvature. Dong generalized Chern's result
\cite{ch} \cite{d} to higher codimension, which states that a
graphic submanifold $M=(x,f(x))$ with parallel mean curvature has to
be minimal if the slope of $f$ is uniformly bounded. Hence our
results improve Theorem \ref{b}.

It is natural to ask what is the relations between the results here
and that of the previous paper \cite{x-y}. Since the $v-$function
varies in $\left(
\sec^p\left(\f{\pi}{2\sqrt{2p}}\right),\sec\left(\f{\sqrt{2}}{4}\pi\right)\right)$
on the open geodesic ball of radius $\f{\sqrt{2}}{4}$ in
$\grs{n}{m}$, where $p=\min(n, m)$, the results of the present
article do not generalize those in the previous one. Both results
are complementary.

\bigskip\bigskip

\Section{Preliminaries on the Grassmannian manifold
$\grs{n}{M}$}{Preliminaries on the Grassmannian manifold
$\grs{n}{m}$}
\medskip

Let $\ir{n+m}$ be an $n+m$-dimensional Euclidean space. All oriented
$n$-subspaces constitute the Grassmannian manifolds $\grs{n}{m}$, which
is an irreducible symmetric space of compact type.

Fix $P_0\in \grs{n}{m}$ in the sequel, which is spanned by a unit
$n-$vector $\ep_1\w\cdots\w\ep_n$. For any $P\in\grs{n}{m}$, spanned
by a $n-$vector $e_1\w\cdots\w e_n$, we define an important function
on $\grs{n}{m}$
$$w\mathop{=}\limits^{def.}\left<P, P_0\right>
=\left<e_1\wedge\cdots\wedge e_n, \ep_1\wedge\cdots\wedge
\ep_n\right>=\det W,$$
where $W=(\left<e_i, \ep_j\right>).$   It is
well known that
$$W^TW=O^T\La O,$$
where $O$ is an orthogonal matrix and
$$\La=
\begin{pmatrix} \mu_1^2&&0\cr
          &\ddots&\cr
      0&&\mu_p^2
  \end{pmatrix},\qquad p=\min(m,n),$$
where each $0\le\mu_i^2\le 1.$
The Jordan angles between $P$ and $P_0$ are defined by
$$\th_i=\arccos(\mu_i).$$

Denote
\begin{equation*}
\U=\{P\in \grs{n}{m}:w(P)>0\},
\end{equation*}
let $\{\ep_{n+\a}\}$ be $m$-vectors such that
$\{\ep_i,\ep_{n+\a}\}$ form an orthornormal basis of $\ir{m+n}$.
Then we can span arbitrary $P\in \U$ by $n$ vectors $f_i$:
$$f_i=\ep_i+z_{i\a}\ep_{n+\a},$$
where $Z=(z_{i\a})$ are the local coordinate of $P$ in $\Bbb{U}$.
Here and in the sequel we use the summation convention and agree the
range of indices:
$$1\leq i,j,k,l\leq n;\qquad 1\leq \a,\be,\g,\de\leq m.$$

The canonical metric on $\grs{n}{m}$ in the local coordinate can be
described as (see \cite{x2} Ch. VII)
\begin{equation}\label{metric}
g=\tr\big((I_n+ZZ^T)^{-1}dZ(I_m+Z^TZ)^{-1}dZ^T\big).
\end{equation}

Let $P\in \U$ determined by an $n\times m$ matrix
$Z_0=\big(\la_\a\de_{i\a}\big)$, where $\la_\a=\tan\th_\a$ and
$\th_1,\cdots,\th_m$ be the Jordan angles between $P$ and $P_0$.
(Here and in the sequel we assume $n\geq m$ without loss of
generality; for it is similar for $n<m$.) Let $X, Y, W$ denote
arbitrary $n\times m$ matrices. Then (\ref{metric}) tells us
\begin{eqnarray}\aligned
\lan X,Y\ran_P&=\tr\big((I_n+Z_0Z_0^T)^{-1}X(I_m+Z_0^TZ_0)^{-1}Y^T\big)\\
&=\sum_{i,\a}(1+\la_i^2)^{-1}(1+\la_\a^2)^{-1}X_{i\a}Y_{i\a}.\endaligned\label{metric1}
\end{eqnarray}
(Note that if $m+1\leq i\leq n$, $\la_i=0$.)
Furthermore, from
\begin{eqnarray*}\aligned
&\big(I_n+(Z_0+t W)(Z_0+t W)^T\big)^{-1}X\big(I_m+(Z_0+t W)^T(Z_0+t W)\big)^{-1}Y^T\\
&=\big(I_n+Z_0Z_0^T+t(WZ_0^T+Z_0W^T)+O(t^2)\big)^{-1}X\\
&\qquad\big(I_m+Z_0^TZ_0+t(W^TZ_0+Z_0^TW)+O(t^2)\big)^{-1}Y^T\\
&=\big(I_n+t(I_n+Z_0Z_0^T)^{-1}(WZ_0^T+Z_0W^T)+O(t^2)\big)^{-1}(I_n+Z_0Z_0^T)^{-1}X\\
&\qquad\big(I_m+t(I_m+Z_0^TZ_0)^{-1}(W^TZ_0+Z_0^TW)+O(t^2)\big)^{-1}(I_m+Z_0^TZ_0)^{-1}Y^T\\
&=\big(I_n-t(I_n+Z_0Z_0^T)^{-1}(WZ_0^T+Z_0W^T)+O(t^2)\big)(I_n+Z_0Z_0^T)^{-1}X\\
&\qquad\big(I_m-t(I_m+Z_0^TZ_0)^{-1}(W^TZ_0+Z_0^TW)+O(t^2)\big)(I_m+Z_0^TZ_0)^{-1}Y^T\\
&=(I_n+Z_0Z_0^T)^{-1}X(I_m+Z_0^TZ_0)^{-1}Y^T\\
&\quad-t\big[(I_n+Z_0Z_0^T)^{-1}(WZ_0^T+Z_0W^T)(I_n+Z_0Z_0^T)^{-1}X(I_m+Z_0^TZ_0)^{-1}Y^T\\
&\qquad+(I_n+Z_0Z_0^T)^{-1}X(I_m+Z_0^TZ_0)^{-1}(W^TZ_0+Z_0^T
W)(I_m+Z_0^TZ_0)^{-1}Y^T\big]+O(t^2),
\endaligned
\end{eqnarray*}
we have
\begin{eqnarray}\aligned
W\lan
X,Y\ran_{P}=&-\tr\big[(I_n+Z_0Z_0^T)^{-1}(WZ_0^T+Z_0W^T)\\
&\qquad\qquad (I_n+Z_0Z_0^T)^{-1}X(I_m+Z_0^TZ_0)^{-1}Y^T\\
&\qquad+(I_n+Z_0Z_0^T)^{-1}X(I_m+Z_0^TZ_0)^{-1}\\
&\qquad\qquad
(W^TZ_0+Z_0^TW)(I_m+Z_0^TZ_0)^{-1}Y^T\big].\endaligned\label{metric2}
\end{eqnarray}

We let $E_{i\a}$ be the matrix with 1 in the intersection of row $i$ and column $\a$ and 0 otherwise. Denote
$g_{i\a,j\be}=\lan E_{i\a},E_{j\be}\ran$ and let
$\big(g^{i\a,j\be}\big)$ be the inverse matrix of
$\big(g_{i\a,j\be}\big)$. Denote by $\n$ the Levi-Civita connection with respect to the canonical matric on $\grs{n}{m}$, and by
\begin{equation*}
\n_{E_{i\a}}E_{j\be}=\G_{i\a,j\be}^{k\g}E_{k\g}.
\end{equation*}
Then from (\ref{metric1}),
\begin{eqnarray}
g_{i\a,j\be}(P)=(1+\la_i^2)^{-1}(1+\la_\a^2)^{-1}\de_{\a\be}\de_{ij}\label{metric3}
\end{eqnarray}
and obviously
\begin{equation}\label{metric4}
g^{i\a,j\be}(P)=(1+\la_i^2)(1+\la_\a^2)\de_{\a\be}\de_{ij}.
\end{equation}
Moreover, a direct calculation from (\ref{metric2}) and (\ref{metric4}) shows
\begin{eqnarray}\label{metric5}\aligned
\G_{i\a,j\be}^{k\g}&=\f{1}{2}g^{k\g,l\de}\big(-E_{l\de}\lan E_{i\a},E_{j\be}\ran+E_{i\a}\lan E_{j\be},E_{l\de}\ran+E_{j\be}\lan E_{l\de},E_{i\a}\ran\big)\\
&=-\la_\a(1+\la_\a^2)^{-1}\de_{\a j}\de_{\be\g}\de_{ik}-\la_\be(1+\la_\be^2)^{-1}\de_{\be i}\de_{\a\g}\de_{jk}.\endaligned
\end{eqnarray}

From (\ref{metric4}), we see that
\begin{equation}
(1+\la_i^2)^{\f{1}{2}}(1+\la_\a^2)^{\f{1}{2}}E_{i\a}\ (1\leq i\leq n,1\leq \a\leq m)
\end{equation}
form an orthonormal basis of $T_P \grs{n}{m}$. Denote its dual basis in $T_P^* \grs{n}{m}$  by
\begin{equation}
\om_{i\a}\ (1\leq i\leq n,1\leq \a\leq m),
\end{equation}
then
\begin{equation}\label{metric6}
g=\sum_{i,\a}\om_{i\a}^2
\end{equation}
at $P$.

\bigskip\bigskip

\Section{Hessian estimates of two  smooths functions on
$\grs{n}{m}$}{Hessian estimates of two  smooths functions on
$\grs{n}{m}$}\label{s3}
\medskip

On $\U$, $w>0$, then we can define
\begin{equation}
v=w^{-1}\qquad \mbox{ on }\U.
\end{equation}
For arbitrary $Q\in \U$ determined by an $n\times m$ matrix $Z$, it is easily seen that
\begin{equation}\label{eq3}
v(Q)=\big[\det(I_n+ZZ^T)\big]^{\f{1}{2}}=\prod_{\a=1}^m \sec\th_\a.
\end{equation}
where $\th_1,\cdots,\th_m$ denotes the Jordan angles between $Q$ and $P_0$.

Now we calculate the Hessian of $v$ at $P$ whose corresponding matrix is $Z_0$. At first, by noting that for any
$n\times n$ orthogonal matrix $U$ and $m\times m$ orthogonal matrix $V$, $Z\mapsto UZV$ induces an isometry of $\U$
which keeps $v$ invariant, we can assume $Z_0=(\la_\a\de_{i\a})$ without loss of generality, where $\la_\a=\tan\th_\a$
and $\th_1,\cdots,\th_m$ denotes the Jordan angles between $P$ and $P_0$.
We also need a Lemma as follows.

\begin{lem}\label{l1}

Let $M$ be a manifold, $A$ be a smooth nonsingular $n\times n$ matrix-valued function on $M$, $X,Y$ be local tangent fields, then
\begin{equation}\label{eq1}
\n_X\log\det A=\text{tr}(\n_X A\cdot A^{-1})
\end{equation}
and
\begin{equation}\label{eq2}
\n_Y\n_X\log\det A=\tr(\n_Y\n_X A\cdot A^{-1})-\tr(\n_X A\cdot A^{-1}\cdot \n_Y A\cdot A^{-1}).
\end{equation}
\end{lem}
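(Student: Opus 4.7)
The plan is to prove \eqref{eq1} via Jacobi's classical formula for the logarithmic derivative of a determinant, and then obtain \eqref{eq2} by differentiating once more and invoking the standard identity $\n_Y A^{-1}=-A^{-1}\cdot\n_Y A\cdot A^{-1}$. Because $\log\det A$ and the entries of $A$ are all scalar-valued functions on $M$, the directional derivative $\n_X$ acts componentwise and no Christoffel symbols intervene in the matrix manipulations.

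For \eqref{eq1}, I fix a point $p\in M$ and choose a smooth curve $\gamma$ with $\gamma(0)=p$ and $\gamma'(0)=X_p$. Writing $A(t):=A(\gamma(t))$ and factoring
$$A(t)=A(p)\bigl(I_n+tA(p)^{-1}A'(0)+O(t^2)\bigr),$$
the expansion $\det(I_n+tB)=1+t\tr B+O(t^2)$ gives $\det A(t)=\det A(p)\bigl(1+t\tr(A(p)^{-1}A'(0))+O(t^2)\bigr)$. Differentiating $\log\det A(t)$ at $t=0$ yields $\tr(A^{-1}\,\n_X A)$, which equals $\tr(\n_X A\cdot A^{-1})$ by the cyclic invariance of the trace.

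For \eqref{eq2}, I apply $\n_Y$ to both sides of \eqref{eq1}. Treating $\tr(\n_X A\cdot A^{-1})$ as a product of matrix-valued functions paired by the trace and using the Leibniz rule,
$$\n_Y\n_X\log\det A=\tr\bigl(\n_Y\n_X A\cdot A^{-1}\bigr)+\tr\bigl(\n_X A\cdot\n_Y A^{-1}\bigr).$$
To handle the second piece, I differentiate the identity $A\cdot A^{-1}=I_n$ in the direction $Y$ to get $\n_Y A\cdot A^{-1}+A\cdot\n_Y A^{-1}=0$, hence $\n_Y A^{-1}=-A^{-1}\cdot\n_Y A\cdot A^{-1}$. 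Substituting this back produces exactly \eqref{eq2}.

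The computation is a routine exercise in matrix calculus and presents no genuine obstacle. The only clerical point worth noting is that on a curved manifold $\n_Y\n_X$ acting on a scalar differs from $Y(Xf)$ by the term $(\n_YX)(f)$; in the identity \eqref{eq2} this subtlety is harmless, because the corresponding contribution $\tr\bigl(\n_{\n_YX}A\cdot A^{-1}\bigr)$ enters the left-hand side through \eqref{eq1} and the right-hand side through the very definition of the matrix-valued second covariant derivative, and it cancels. Thus \eqref{eq2} holds as stated regardless of the convention one chooses, provided it is applied consistently to both sides.
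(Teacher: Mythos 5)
Your proof is correct and follows essentially the same route as the paper: establish Jacobi's formula \eqref{eq1} and then obtain \eqref{eq2} by differentiating it together with $\n_Y A^{-1}=-A^{-1}\cdot\n_Y A\cdot A^{-1}$, exactly as the authors do. The only cosmetic difference is that you derive \eqref{eq1} from the first-order expansion $\det(I_n+tB)=1+t\,\tr B+O(t^2)$ along a curve, whereas the paper differentiates the identity $\det A\; e_1\w\cdots\w e_n=Ae_1\w\cdots\w Ae_n$; your closing remark about the $(\n_YX)$ term cancelling on both sides is also accurate and consistent with how the lemma is used later.
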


\begin{proof}
Assume that $e_1,\cdots,e_n$ is a standard basis in $\R^n$, then
$$\det A\ e_1\w\cdots\w e_n=Ae_1\w\cdots\w Ae_n.$$
Hence
\begin{eqnarray*}\aligned
\n_X \det A\ e_1\w\cdots\w e_n=&\sum_i Ae_1\w\cdots\w Ae_{i-1}\w \n_X Ae_i\w Ae_{i+1}\w\cdots\w Ae_n\\
=&\sum_i Ae_1\w\cdots\w Ae_{i-1}\w (\n_X A\cdot A^{-1})Ae_i\w Ae_{i+1}\w\cdots\w Ae_n\\
=&\tr(\n_X A\cdot A^{-1})Ae_1\w\cdots\w Ae_n\\
=&\tr(\n_X A\cdot A^{-1})\det A\ e_1\w\cdots\w e_n.\endaligned
\end{eqnarray*}
Thereby (\ref{eq1}) immediately follows.

(\ref{eq2}) follows from (\ref{eq1}) and
$$A\cdot \n_Y A^{-1}+\n_Y A\cdot A^{-1}=\n_Y(AA^{-1})=0.$$
\end{proof}

Now we let $M=\U$, $A(Z)=I_n+ZZ^T$, then $\log v=\f{1}{2}\log\det A$. A direct calculation shows
$$\n_X A=XZ^T+ZX^T,\qquad \n_Y\n_X A=XY^T+YX^T.$$
Hence we compute from Lemma \ref{l1} that at $P$
\begin{eqnarray*}\aligned
\n_X \log v&=\f{1}{2}\tr\left((XZ_0^T+Z_0X^T)(I_n+Z_0Z_0^T)^{-1}\right)\\
&=\sum_\a \la_\a(1+\la_\a^2)^{-1}X_{\a\a},
\endaligned\end{eqnarray*}
\begin{eqnarray*}
\n_X\n_Y \log v&=&\f{1}{2}\tr\left((XY^T+YX^T)(I_n+Z_0Z_0^T)^{-1}\right)\\
&&-\f{1}{2}\tr\left((XZ_0^T+Z_0X^T)(I_n+Z_0Z_0^T)^{-1}(YZ_0^T+Z_0Y^T)(I_n+Z_0Z_0^T)^{-1}\right)\\
&=&\sum_{i,\a}(1+\la_i^2)^{-1}X_{i\a}Y_{i\a}\\
&&\qquad-\f{1}{2}\sum_{i,j}(XZ_0^T+Z_0X^T)_{ij}(1+\la_j^2)^{-1}(YZ_0^T+Z_0Y^T)_{ji}(1+\la_i^2)^{-1}\\
&=&\sum_{m+1\leq i\leq n,\a}X_{i\a}Y_{i\a}+\f{1}{2}\sum_{\a,\be}(1+\la_\a^2)^{-1}X_{\a\be}Y_{\a\be}+\f{1}{2}\sum_{\a,\be}(1+\la_\be^2)^{-1}X_{\be\a}Y_{\be\a}\\
&&\qquad-\f{1}{2}\sum_{\a,\be}(\la_\be X_{\a\be}+\la_\a X_{\be\a})(1+\la_\be^2)^{-1}(\la_\a Y_{\be\a}+\la_\be Y_{\a\be})(1+\la_\a^2)^{-1}\\
&&\qquad-\sum_{m+1\leq i\leq n,\a}\la_\a^2(1+\la_\a^2)^{-1}X_{i\a}Y_{i\a}\\
&=&\sum_{m+1\leq i\leq n,\a}(1+\la_\a^2)^{-1}X_{i\a}Y_{i\a}+\sum_{\a,\be}(1+\la_\a^2)^{-1}(1+\la_\be^2)^{-1}X_{\a\be}Y_{\a\be}\\
&&\qquad-\sum_{\a,\be}\la_\a\la_\be(1+\la_\a^2)^{-1}(1+\la_\be^2)^{-1}X_{\a\be}Y_{\be\a}.
\end{eqnarray*} Furthermore,
\begin{eqnarray*}\aligned
\n_X v=&v\n_X \log v=\left(\sum_\a \la_\a(1+\la_\a^2)^{-1}X_{\a\a}\right)v,\\
\n_X\n_Y v=&v(\n_X\n_Y\log v+\n_X\log v\cdot \n_Y \log v)\\
=&\Big(\sum_{m+1\leq i\leq n,\a}(1+\la_\a^2)^{-1}X_{i\a}Y_{i\a}
+\sum_{\a,\be}(1+\la_\a^2)^{-1}(1+\la_\be^2)^{-1}X_{\a\be}Y_{\a\be}\\
&\qquad+\sum_{\a,\be}\la_\a\la_\be(1+\la_\a^2)^{-1}(1+\la_\be^2)^{-1}(X_{\a\a}Y_{\be\be}-X_{\a\be}Y_{\be\a})\Big)v\\
=&\Big(\sum_{i,\be}(1+\la_i^2)^{-1}(1+\la_\be^2)^{-1}X_{i\be}Y_{i\be}\\
&\qquad+\sum_{\a,\be}\la_\a\la_\be(1+\la_\a^2)^{-1}(1+\la_\be^2)^{-1}(X_{\a\a}Y_{\be\be}-X_{\a\be}Y_{\be\a})\Big)v.
\endaligned
\end{eqnarray*}
In particular,
\begin{equation}\label{d1}
\n_{E_{i\a}}v(P)=\la_\a(1+\la_\a^2)^{-1}v\de_{i\a}
\end{equation}
and
\begin{equation}\label{d2}
\n_{E_{i\a}}\n_{E_{j\be}}v(P)=\left\{\begin{array}{cl}
(1+\la_i^2)^{-1}(1+\la_\a^2)^{-1}v & i=j,\a=\be;\\
-\la_\a\la_\be(1+\la_\a^2)^{-1}(1+\la_\be^2)^{-1}v & i=\be,j=\a,\a\neq \be;\\
\la_\a\la_\be(1+\la_\a^2)^{-1}(1+\la_\be^2)^{-1}v & i=\a,j=\be,\a\neq \be;\\
0 & \mbox{otherwise.}
\end{array}\right.
\end{equation}
Then, from (\ref{metric5}), (\ref{d1}) and (\ref{d2}) we obtain
\begin{eqnarray}\label{He}\aligned
\Hess(v)(E_{i\a},E_{j\be})(P)
=&\n_{E_{i\a}}\n_{E_{j\be}}v-(\n_{E_{i\a}}E_{j\be})v\\
=&\n_{E_{i\a}}\n_{E_{j\be}}v-\G_{i\a,j\be}^{k\g}\n_{E_{k\g}}v\\
=&\left\{\begin{array}{cl}
(1+\la_i^2)^{-1}(1+\la_\a^2)^{-1}v & i=j,\a=\be, i\neq \a;\\
(1+2\la_\a^2)(1+\la_\a^2)^{-2}v & i=j=\a=\be;\\
\la_\a\la_\be(1+\la_\a^2)^{-1}(1+\la_\be^2)^{-1}v & i=\be,j=\a,\a\neq \be;\\
\la_\a\la_\be(1+\la_\a^2)^{-1}(1+\la_\be^2)^{-1}v & i=\a,j=\be,\a\neq \be;\\
0 & \mbox{otherwise.}
\end{array}\right.\endaligned
\end{eqnarray}
In other words
\begin{eqnarray}\label{He1}\aligned
\Hess(v)_P&=\sum_{i\neq \a}v\ \om_{i\a}^2+\sum_\a (1+2\la_\a^2)v\
\om_{\a\a}^2
+\sum_{\a\neq\be} \la_\a\la_\be v(\om_{\a\a}\otimes \om_{\be\be}+\om_{\a\be}\otimes\om_{\be\a})\\
&=\sum_{m+1\leq i\leq n,\a}v\
\om_{i\a}^2+\sum_{\a}(1+2\la_\a^2)v\ \om_{\a\a}^2
                                          +\sum_{\a\neq \be}\la_\a\la_\be v\ \om_{\a\a}\otimes\om_{\be\be}\\
&\qquad\qquad+\sum_{\a<\be}\Big[(1+\la_\a\la_\be)v\Big(\f{\sqrt{2}}{2}(\om_{\a\be}
+\om_{\be\a})\Big)^2\\
&\hskip2in+(1-\la_\a\la_\be)v\Big(\f{\sqrt{2}}{2}(\om_{\a\be}-\om_{\be\a})\Big)^2\Big].
\endaligned
\end{eqnarray}
(\ref{He1}) could be simplified further. Please note (\ref{d1}), which also tells us
\begin{equation}\label{d4}
dv=\sum_\a \la_\a v\ \om_{\a\a};
\end{equation}
then
\begin{equation}\label{d3}
dv\otimes dv=\sum_{\a}\la_\a^2 v^2\ \om_{\a\a}^2+\sum_{\a\neq
\be}\la_\a\la_\be v^2\ \om_{\a\a}\otimes\om_{\be\be}.
\end{equation}
Substituting (\ref{d3}) into (\ref{He1}) yields
\begin{eqnarray}\label{He2}\aligned
\Hess(v)_P&=\sum_{m+1\leq i\leq n,\a}v\ \om_{i\a}^2
+\sum_{\a}(1+\la_\a^2)v\ \om_{\a\a}^2 +v^{-1}\
dv\otimes dv\\
&\qquad+\sum_{\a<\be}\Big[(1+\la_\a\la_\be)v\Big(\f{\sqrt{2}}{2}(\om_{\a\be}+\om_{\be\a})\Big)^2\\
&\hskip1in+(1-\la_\a\la_\be)v\Big(\f{\sqrt{2}}{2}(\om_{\a\be}-\om_{\be\a})\Big)^2\Big].
\endaligned
\end{eqnarray}

Note that $\la_\a\geq 0$ and $1-\la_\a\la_\be=1-\tan\th_\a\tan\th_\be=\f{\cos(\th_\a+\th_\be)}{\cos\th_\a\cos\th_\be}$;
which implies that $\Hess(v)_P$ is positive definite if and only if $\th_\a+\th_\be<\f{\pi}{2}$ for
arbitrary $\a\neq \be$, i.e., $P\in B_{JX}(P_0)$.

By (\ref{eq3}), $v=\prod_{\a}(1+\la_\a^2)^{\f{1}{2}}$, then
$$\la_\a\la_\be\leq \big[(1+\la_\a^2)(1+\la_\be^2)\big]^{\f{1}{2}}-1\leq
v-1,$$ the equality holds if and only if $\la_\a=\la_\be$ and
$\la_\g=0$ for each $\g\neq \a,\be$. Hence, we have
$1-\la_\a\la_\be\geq 2-v$. Finally we arrive at an estimate
\begin{equation}\label{es1}
\Hess(v)\geq v(2-v)g+v^{-1}dv\otimes dv.
\end{equation}

Now we introduce another smooth function on $\U$. For any $Q\in \U$,
\begin{equation}
u\mathop{=}\limits^{def.}\sum_\a \tan\th_\a^2.
\end{equation}
where $\th_1,\cdots,\th_m$ denotes the Jordan angles between $Q$ and $P_0$. Denote by $Z$ the coordinate of $Q$,
then it is easily seen that
\begin{equation}\label{eq10}
u(Q)=\tr(ZZ^T).
\end{equation}
We can calculate the Hessian of $u$ at $P\in \U$ whose corresponding matrix is $Z_0$ in the same way. Similar to above, we
can assume $Z_0=\big(\la_a\de_{i\a}\big)$, where $\la_\a=\tan\th_\a$ and
$\th_1,\cdots,\th_m$ are the Jordan angles between $P$ and $P_0$.

Obviously
\begin{eqnarray}\label{d8}\aligned
\n_X u=&\tr(XZ^T)+\tr(ZX^T),\\
\n_X\n_Y u=&\tr(XY^T)+\tr(YX^T).\endaligned
\end{eqnarray}
Then, at $P$
\begin{eqnarray}\label{He3}\aligned
\Hess(u)(E_{i\a},E_{j\be})=&\n_{E_{i\a}}\n_{E_{j\be}}u-(\n_{E_{i\a}}E_{j\be})u\\
=&\n_{E_{i\a}}\n_{E_{j\be}}u-\G_{i\a,j\be}^{k\g}\n_{E_{k\g}}u\\
=&2\de_{ij}\de_{\a\be}+\big(\la_\a(1+\la_\a^2)^{-1}\de_{\a j}\de_{\be \g}\de_{ik}+\la_\be(1+\la_\be^2)^{-1}\de_{\be i}\de_{\a\g}\de_{jk}\big)\\
&\cdot 2\la_\g\de_{k\g}\\
=&2\de_{ij}\de_{\a\be}+2\la_\a\la_\be\big[(1+\la_\a^2)^{-1}+(1+\la_\be^2)^{-1}\big]\de_{\a j}\de_{\be i}\\
=&\left\{\begin{array}{cl}
2 & i=j,\a=\be,i\neq \a;\\
2+4\la_\a^2(1+\la_\a^2)^{-1} & i=j=\a=\be;\\
2\la_\a\la_\be\big[(1+\la_\a^2)^{-1}+(1+\la_\be^2)^{-1}\big] & i=\be,j=\a,\a\neq \be.
\end{array}\right.\endaligned
\end{eqnarray}
In other words
\begin{eqnarray}\label{He4}\aligned
\Hess(u)_P=&\sum_{i\neq \a}2(1+\la_i^2)(1+\la_\a^2)\om_{i\a}^2+\sum_\a (2+6\la_\a^2)(1+\la_\a^2)\om_{\a\a}^2\\
&\qquad+\sum_{\a\neq \be}2\la_\a\la_\be(2+\la_\a^2+\la_\be^2)\om_{\a\be}\otimes \om_{\be\a}\\
=&\sum_{m+1\leq i\leq n,\a}2(1+\la_i^2)(1+\la_\a^2)\om_{i\a}^2+\sum_\a (2+6\la_\a^2)(1+\la_\a^2)\om_{\a\a}^2\\
&+2\big[(1+\la_\a^2)(1+\la_\be^2)+\la_\a\la_\be(2+\la_\a^2+\la_\be^2)\big]
\left[\f{\sqrt{2}}{2}(\om_{\a\be}+\om_{\be\a})\right]^2\\
&+2\big[(1+\la_\a^2)(1+\la_\be^2)-\la_\a\la_\be(2+\la_\a^2+\la_\be^2)\big]
\left[\f{\sqrt{2}}{2}(\om_{\a\be}-\om_{\be\a})\right]^2
\endaligned
\end{eqnarray}

By computing,
\begin{equation}\label{eq4}
2\big[(1+\la_\a^2)(1+\la_\be^2)-\la_\a\la_\be(2+\la_\a^2+\la_\be^2)\big]
=2(1-\la_\a\la_\be)(\la_\a^2+\la_\be^2-\la_\a\la_\be+1).
\end{equation}
It is positive if and only if $1-\la_\a\la_\be=1-\tan\th_\a\tan\th_\be=\f{\cos(\th_\a+\th_\be)}{\cos\th_\a\cos\th_\be}\geq 0$,
i.e., $\th_\a+\th_\be<\f{\pi}{2}$. Hence $\Hess(u)_P$ is positive definite if and only $P\in B_{JX}(P_0)$.

Moreover, the right side of (\ref{eq4}) can be estimated by
\begin{eqnarray*}\aligned
2(1-\la_\a\la_\be)(\la_\a^2+\la_\be^2-\la_\a\la_\be+1)
&\geq 2\big(1-\f{\la_\a^2+\la_\be^2}{2}\big)\big(\f{\la_\a^2+\la_\be^2}{2}+1\big)\\
&=2\big(1-\f{(\la_\a^2+\la_\be^2)^2}{4}\big)\\
&\geq 2(1-\f{1}{4}u^2)=2-\f{1}{2}u^2.\endaligned
\end{eqnarray*}
(Here we used the fact $u=\sum_{\a}\tan^2\th_\a=\sum_\a \la_\a^2$.)
By combining it with (\ref{He4}) and (\ref{metric6}), we arrive that
\begin{equation}\label{es2}
\Hess(u)\geq \left(2-\f{1}{2}u^2\right)g.
\end{equation}

For later applications the estimates (\ref{es1}) and (\ref{es2}) are
not accurate enough. Using the radial compensation technique we
could refine those estimates which are based on the following
lemmas.

\begin{lem}\label{l2}

Let $V$ be a real linear space, $h$ be a nonnegative definite
quadratic form on $V$ and $\om\in V^*$. $V=V_1\oplus V_2$, $h$ is
positive definite on $V_1$, $h(V_1,V_2)=0$ and $\om(V_2)=0$. Denote
by $\om^*$ the unique vector in $V_1$ such that for any $z\in V_1$,
$$\om(z)=h(\om^*,z).$$
Then we have
\begin{equation}\label{eq5}
h\geq \om(\om^*)^{-1}\om\otimes \om.
\end{equation}

\end{lem}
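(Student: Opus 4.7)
The plan is to decompose a generic vector $x \in V$ according to the splitting $V = V_1 \oplus V_2$ and exploit the orthogonality hypotheses to reduce everything to an assertion on $V_1$, where $h$ restricts to an honest inner product. Write $x = x_1 + x_2$ with $x_1 \in V_1$, $x_2 \in V_2$. Since $\omega(V_2)=0$, we immediately get $\omega(x) = \omega(x_1)$. Since $h(V_1,V_2)=0$ and $h$ is nonnegative definite, expanding $h(x,x)$ kills the cross term and the $V_2$--contribution is nonnegative, so $h(x,x) \geq h(x_1,x_1)$. Hence proving the pointwise inequality $h(x,x) \geq \omega(\omega^\ast)^{-1}\omega(x)^2$ for all $x \in V$ reduces to proving the same inequality for $x_1 \in V_1$.

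On $V_1$, the form $h$ is positive definite and thus a genuine inner product. By the very definition of $\omega^\ast$, the quantity $\omega(x_1)$ equals $h(\omega^\ast, x_1)$, and $\omega(\omega^\ast) = h(\omega^\ast, \omega^\ast) > 0$. The required inequality then reads
\begin{equation*}
h(\omega^\ast,\omega^\ast)\, h(x_1,x_1) \geq h(\omega^\ast,x_1)^2,
\end{equation*}
which is precisely the Cauchy--Schwarz inequality for the inner product $h$ on $V_1$.

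There is no genuine obstacle here; the only thing to check carefully is that the hypotheses guarantee (i) $\omega^\ast$ is well-defined and lies in $V_1$ (existence and uniqueness follow from the Riesz representation theorem for the nondegenerate form $h|_{V_1}$ applied to the linear functional $\omega|_{V_1}$), and (ii) the decomposition actually discards only nonnegative contributions. The statement $h \geq \omega(\omega^\ast)^{-1}\omega\otimes\omega$ is saturated precisely when $x_2 = 0$ and $x_1$ is proportional to $\omega^\ast$, which is consistent with the Cauchy--Schwarz equality case and confirms the sharpness of the estimate that will later be fed into the radial compensation refinement of \eqref{es1} and \eqref{es2}.
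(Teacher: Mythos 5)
Your proposal is correct and takes essentially the same route as the paper: the paper's proof writes $y=\lambda\,\omega^\ast+z_1+z_2$ with $z_1\in V_1$, $h(\omega^\ast,z_1)=0$, $z_2\in V_2$, and expands $h(y,y)\geq \lambda^2 h(\omega^\ast,\omega^\ast)=\lambda^2\omega(\omega^\ast)$, which is exactly your reduction to $V_1$ with the Cauchy--Schwarz step spelled out via the orthogonal decomposition. One small aside only: your equality-case remark is slightly off, since $h$ is only nonnegative on $V_2$, so saturation forces $h(x_2,x_2)=0$ but not $x_2=0$; this does not affect the validity of the inequality itself.
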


\begin{proof}

For arbitrary $y\in V$, there exist $\la\in \mathbb R$, $z_1\in
V_1$ and $z_2\in V_2$, such that $y=\la \om^*+z_1+z_2$ and
$h(\om^*,z_1)=0$. Then
$$h(y,y)=\la^2 h(\om^*,\om^*)+ h(z_1,z_1)+h(z_2,z_2)\geq \la^2 h(\om^*,\om^*)=\la^2\om(\om^*)$$
and
$$\om(\om^*)^{-1}\om\otimes \om(y,y)=\om(\om^*)^{-1}\om(y)^2=\la^2\om(\om^*).$$
Hence (\ref{eq5}) holds.

\end{proof}

\begin{lem}\label{l3}
Let $\Om$ be a compact and convex subset of $\ir{k}$, such that for every $\si\in \mathscr{S}(k)$ and $x=(x^1,\cdots,x^k)\in \Om$,
\begin{equation}\label{T}
T_\si(x)=(x^{\si(1)},\cdots,x^{\si(k)})\in \Om;
\end{equation}
where $\mathscr{S}(k)$ denotes the permutation group of $\{1,\cdots,k\}$.
If $f:\Om\ra \R$ is a symmetric $C^2$ function, and $(D^2 f)$ is
nonpositive definite everywhere in $\Om$, then there exists $x_0=(x_0^1,\cdots,x_0^k)\in \Om$, such that $x_0^1=x_0^2=\cdots=x_0^k$ and
\begin{equation}\label{eq6}
f(x_0)=\sup_\Om f.
\end{equation}

\end{lem}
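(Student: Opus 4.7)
The plan is to exploit symmetry and concavity directly: symmetrize an arbitrary maximizer over the permutation group and then use Jensen's inequality.

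First, since $\Om$ is compact and $f\in C^2(\Om)$, the supremum is attained at some point $y=(y^1,\ldots,y^k)\in\Om$. Because $f$ is symmetric and $\Om$ is invariant under all coordinate permutations (condition (\ref{T})), every rearrangement $T_\si(y)$ lies in $\Om$ and satisfies $f(T_\si(y))=f(y)=\sup_\Om f$. Next, set
\begin{equation*}
x_0=\f{1}{k!}\sum_{\si\in\mathscr{S}(k)}T_\si(y).
\end{equation*}
Since $\Om$ is convex and contains each $T_\si(y)$, the convex combination $x_0$ lies in $\Om$. Moreover, averaging over $\mathscr{S}(k)$ kills all the $k!/k$ distinct reorderings of the coordinates uniformly, so every component of $x_0$ equals $\f{1}{k}\sum_{i=1}^k y^i$; i.e.\ $x_0^1=x_0^2=\cdots=x_0^k$, as required.

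It remains to check that $f(x_0)=\sup_\Om f$. The hypothesis $(D^2 f)\leq 0$ on the convex set $\Om$ means $f$ is concave there, so Jensen's inequality yields
\begin{equation*}
f(x_0)=f\!\left(\f{1}{k!}\sum_{\si}T_\si(y)\right)\geq \f{1}{k!}\sum_{\si}f(T_\si(y))=f(y)=\sup_\Om f.
\end{equation*}
Since $x_0\in\Om$ gives the reverse inequality $f(x_0)\leq\sup_\Om f$, equality holds and (\ref{eq6}) is proved.

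There is essentially no obstacle: once one recognizes that the two hypotheses (permutation-invariance of $\Om$ plus symmetry of $f$, and concavity of $f$) fit together via the standard symmetrization trick, the argument is a one-line application of Jensen. The only minor point to verify is that $D^2 f\leq 0$ on the convex domain $\Om$ indeed implies $f$ is concave along line segments in $\Om$, which is the usual one-variable second-derivative test applied to $t\mapsto f(x_0+t(z-x_0))$.
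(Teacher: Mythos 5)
Your proof is correct and follows essentially the same route as the paper: take a maximizer, use symmetry of $f$ and permutation-invariance of $\Om$ to get maximizers at all rearrangements, then use concavity (from $D^2 f\le 0$) on a convex combination with equal coordinates. The only cosmetic difference is that you average over all $k!$ permutations while the paper averages the $k$ cyclic shifts, which yields the same point $\big(\f{1}{k}\sum_i y^i,\cdots,\f{1}{k}\sum_i y^i\big)$.
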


\begin{proof}

 By the compactness of $\Om$, there exists $x=(x^1,\cdots,x^k)\in \Om$, such that $f(x)=\sup_\Om f$. Furthermore we have
\begin{equation*}
f\big(T_\si(x)\big)=f(x)=\sup_\Om f\qquad \si\in \mathscr{S}(k)
\end{equation*}
from the fact that $f$ is symmetric. Denote by $C_\si(x)$ the convex closure of $\{T_\si(x):\si\in \mathscr{S}(k)\}$, then $C_\si(x)\subset \Om$
and $f(y)\geq \sup_{\si\in \mathscr{S}(k)}f\big(T_\si(x)\big)=\sup_\Om f$ for arbitrary $y\in C_\si(x)$, since $(D^2 f)\leq 0$; which implies
\begin{equation*}
f\big|_{C_\si(x)}\equiv \sup_\Om f.
\end{equation*}
Denote $x_0^1=\cdots=x_0^k=\f{1}{k}\sum_{i=1}^k x^i$, then
$$x_0=(x_0^1,\cdots,x_0^k)=\f{1}{k}\sum_{s=1}^k (x^s,x^{s+1},\cdots,x^k,x^1,x^2,\cdots,x^{s-1})\in C_\si(x);$$
From which (\ref{eq6}) follows.

\end{proof}

By (\ref{es1}),
\begin{equation}\label{d5}
h\mathop{=}\limits^{def.}\Hess(v)-v(2-v)g-v^{-1}dv\otimes dv
\end{equation}
is nonnegative definite on $T_P \grs{n}{m}$. Denote
\begin{equation}
V_1=\bigoplus_{\a}E_{\a\a},\qquad V_2=\bigoplus_{i\neq \a}E_{i\a} ;
\end{equation}
then $T_P \grs{n}{m}=V_1\oplus V_2$, and (\ref{He2}), (\ref{metric6}), (\ref{d4}) tell us
$$h(V_1,V_2)=0,\ dv(V_2)=0$$
and
\begin{equation}\label{eq7}
h|_{V_1}=\sum_\a (v-1+\la_\a^2)v\ \om_{\a\a}^2.
\end{equation}
is positive definite. Denote by $\td{\n}v$ the unique element in $V_1$ such that for any $X\in V_1$,
$$h(\td{\n}v,X)=dv(X).$$
From (\ref{eq7}) and (\ref{d4}), it is not difficult to obtain
$$\td{\n}v=\sum_\a \f{\la_\a(1+\la_\a^2)}{v-1+\la_\a^2}E_{\a\a}$$
and
\begin{equation}
dv(\td{\n}v)=\sum_\a \f{\la_\a^2}{v-1+\la_\a^2}v.
\end{equation}
Then Lemma \ref{l2} and (\ref{d5}) tell us
\begin{equation}\label{es3}
\Hess(v)\geq v(2-v)g+\Big[1+\big(\sum_\a \f{\la_\a^2}{v-1+\la_\a^2}\big)^{-1}\Big]v^{-1}dv\otimes dv.
\end{equation}
It is necessary to estimate the upper bound of $\sum_\a \f{\la_\a^2}{v-1+\la_\a^2}$.
Denote
\begin{equation}\label{d6}
\nu_\a=\log(1+\la_\a^2),
\end{equation}
then $\la_\a^2=-1+e^{\nu_\a}$; since $v=\prod_\a(1+\la_\a^2)^{\f{1}{2}}$,
\begin{equation*}
\log v=\f{1}{2}\sum_\a \log(1+\la_\a^2)=\f{1}{2}\sum_\a \nu_\a
\end{equation*}
and
\begin{equation*}
\sum_\a \f{\la_\a^2}{v-1+\la_\a^2}=\sum_\a \f{-1+e^{\nu_\a}}{v-2+e^{\nu_\a}}.
\end{equation*}
Now we define
\begin{equation}\label{d7}
\Om=\big\{(\nu_1,\cdots,\nu_m)\in \R^m: \nu_\a\geq 0, \sum_\a \nu_\a=2\log v\big\},
\end{equation}
and $f: \Om\ra \R$ by
\begin{equation*}
(\nu_1,\cdots,\nu_m)\mapsto \sum_\a \f{-1+e^{\nu_\a}}{v-2+e^{\nu_\a}}.
\end{equation*}
Then obviously $\Om$ is compact and convex, $T_\si(\Om)=\Om$ for every $\si\in \mathscr{S}(m)$ (cf. Lemma \ref{l3}), $f$ is a symmetric function and
a direct calculation shows
\begin{equation*}
\f{\p^2 f}{\p \nu_\a\p \nu_\be}=\f{(v-1)e^{\nu_\a}(v-2-e^{\nu_\a})}{(v-2+e^{\nu_\a})^3}\de_{\a\be};
\end{equation*}
i.e.,
\begin{equation*}
(D^2 f)\leq 0\qquad \mbox{when }v\in (1,2].
\end{equation*}
Then from Lemma \ref{l3},
\begin{equation*}
\sup_\Om f=f\big(\f{2\log v}{m},\cdots,\f{2\log v}{m}\big)=\f{m(-1+v^{\f{2}{m}})}{v-2+v^{\f{2}{m}}};
\end{equation*}
which is an upper bound of $\sum_\a \f{\la_\a^2}{v-1+\la_\a^2}$. Substituting it into (\ref{es3}) gives
$$\Hess(v)\geq v(2-v)g+\Big(\f{v-1}{mv(v^{\f{2}{m}}-1)}+\f{m+1}{mv}\Big)dv\otimes dv.$$
In summary, we have the following Proposition.

\begin{pro}

$v$ is a convex function on $B_{JX}(P_0)\subset \U\subset \grs{n}{m}$,  and
\begin{equation}\label{es4}
\Hess(v)\geq v(2-v)g+\Big(\f{v-1}{pv(v^{\f{2}{p}}-1)}+\f{p+1}{pv}\Big)dv\otimes dv
\end{equation}
on $\{P\in \U:v(P)\leq 2\}$,
where $p=min(n,m)$.

\end{pro}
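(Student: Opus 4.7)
The plan is to refine the already-established estimate \eqref{es1} using the radial compensation technique embodied in Lemmas \ref{l2} and \ref{l3}. Define the nonnegative quadratic form $h = \Hess(v) - v(2-v)g - v^{-1}dv\otimes dv$ on $T_P\grs{n}{m}$, which is nonnegative by \eqref{es1}. To apply Lemma \ref{l2}, I would split $T_P\grs{n}{m}=V_1\oplus V_2$ with $V_1=\bigoplus_\a \R E_{\a\a}$ and $V_2=\bigoplus_{i\neq\a}\R E_{i\a}$; then \eqref{He2} and \eqref{d4} show that $h(V_1,V_2)=0$, $dv(V_2)=0$, and $h|_{V_1}=\sum_\a(v-1+\la_\a^2)v\,\om_{\a\a}^2$, which is positive definite on $V_1$ (since $v\geq 1$ and $\la_\a^2\geq 0$, with at least one $\la_\a>0$ whenever $P\neq P_0$).

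Next I would compute the gradient-type vector $\td{\n}v\in V_1$ dual to $dv|_{V_1}$ with respect to $h|_{V_1}$, read off $\td{\n}v=\sum_\a\frac{\la_\a(1+\la_\a^2)}{v-1+\la_\a^2}E_{\a\a}$ and $dv(\td{\n}v)=v\sum_\a \frac{\la_\a^2}{v-1+\la_\a^2}$, and then Lemma \ref{l2} yields the pointwise refined bound
\begin{equation*}
\Hess(v)\geq v(2-v)g+\Big[1+\Big(\sum_\a \f{\la_\a^2}{v-1+\la_\a^2}\Big)^{-1}\Big]v^{-1}dv\otimes dv.
\end{equation*}
The remaining task is to bound $S:=\sum_\a \frac{\la_\a^2}{v-1+\la_\a^2}$ from above uniformly in terms of $v$, holding $v=\prod_\a(1+\la_\a^2)^{1/2}$ fixed.

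For this optimization I would perform the change of variables $\nu_\a=\log(1+\la_\a^2)$, so $\sum_\a\nu_\a=2\log v$ and $S=f(\nu)=\sum_\a\frac{-1+e^{\nu_\a}}{v-2+e^{\nu_\a}}$. The set $\Om=\{\nu\in\R^m:\nu_\a\geq 0,\ \sum_\a\nu_\a=2\log v\}$ is compact, convex, and permutation-invariant, and $f$ is symmetric. A direct one-variable differentiation gives $\frac{\p^2 f}{\p\nu_\a\p\nu_\be}=\frac{(v-1)e^{\nu_\a}(v-2-e^{\nu_\a})}{(v-2+e^{\nu_\a})^3}\de_{\a\be}$; the factor $v-2-e^{\nu_\a}\leq v-3<0$ is manifestly negative on $\{v\leq 2\}$, so $D^2 f\leq 0$ on $\Om$. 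Lemma \ref{l3} then forces the supremum to be attained at the symmetric point $\nu_\a=\frac{2\log v}{m}$, giving $\sup_\Om f=\frac{m(v^{2/m}-1)}{v-2+v^{2/m}}$. Since the argument is symmetric in the Jordan angles and at most $p=\min(n,m)$ of them are nonzero, the same reasoning with $m$ replaced by $p$ applies (the extra zero coordinates contribute nothing to $S$), yielding $S\leq \frac{p(v^{2/p}-1)}{v-2+v^{2/p}}$.

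Substituting this bound into the refined Hessian estimate and simplifying $1+\frac{v-2+v^{2/p}}{p(v^{2/p}-1)}=\frac{v-1}{p(v^{2/p}-1)}+\frac{p+1}{p}$ produces exactly \eqref{es4}. Convexity of $v$ on $B_{JX}(P_0)$ follows from the discussion immediately preceding \eqref{es1}, where $\Hess(v)$ was shown to be positive definite precisely when every pair of Jordan angles satisfies $\th_\a+\th_\be<\pi/2$. The one genuine subtlety in the plan is the verification that $D^2 f\leq 0$ on the full simplex $\Om$ (not merely at interior critical points), which is where the hypothesis $v\leq 2$ is used in an essential way; this is the only step where the restriction to the sublevel set $\{v\leq 2\}$ enters, and it is the main reason the refined estimate cannot be stated globally on $B_{JX}(P_0)$.
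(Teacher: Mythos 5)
Your proposal is correct and follows essentially the same route as the paper: the same splitting $V_1\oplus V_2$, the same application of Lemma \ref{l2} to $h=\Hess(v)-v(2-v)g-v^{-1}dv\otimes dv$, the same substitution $\nu_\a=\log(1+\la_\a^2)$ with the concavity check and Lemma \ref{l3} on the simplex, and the same algebraic simplification to \eqref{es4}. The only cosmetic difference is that the paper disposes of the $m$ versus $p$ issue by assuming $n\geq m$ from the outset, whereas you argue it via the zero angles contributing nothing; both are fine.
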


Similarly, we consider
\begin{equation}
\td{h}\mathop{=}\limits^{def.}\Hess(u)-(2-\f{1}{2}u^2)g;
\end{equation}
which is nonnegative definite on $T_P \grs{n}{m}$. The definition of $V_1$ and $V_2$ is similar to above.
It is easily seen from (\ref{He4}) and (\ref{metric6}) that
$$\td{h}(V_1,V_2)=0$$
and
\begin{equation}\label{eq8}
\td{h}|_{V_1}=\sum_\a (8\la_\a^2+6\la_\a^4+\f{1}{2}u^2)\om_{\a\a}^2
\end{equation}
is positive definite.
By (\ref{d8}),
\begin{equation}\label{d9}
du=\sum_{\a}2\la_\a(1+\la_\a^2)\om_{\a\a},
\end{equation}
then
$$du(V_2)=0.$$
Hence Lemma \ref{l2} can be applied for us to obtain
\begin{equation}\label{es5}
\Hess(u)\geq (2-\f{1}{2}u^2)g+\big(du(\td{\n}u)\big)^{-1}du\otimes du.
\end{equation}
where $\td{\n}u$ denotes the unique element in $V_1$ such that for arbitrary $X\in V_1$,
$$\td{h}(\td{\n}u, X)=du(X).$$
From (\ref{eq8}) and (\ref{d9}), we can derive
$$\td{\n}u=\sum_\a \f{2\la_\a(1+\la_\a^2)^2}{8\la_\a^2+6\la_\a^4+\f{1}{2}u^2}E_{\a\a},$$
and hence
\begin{equation}\label{eq9}
du(\td{\n}u)=\sum_\a \f{2\la_\a^2(1+\la_\a^2)^2}{3\la_\a^4+4\la_\a^2+\f{1}{4}u^2}.
\end{equation}
(\ref{es5}) tells us it is necessary for us to estimate the upper bound of the right side of
(\ref{eq9}).

Define $\Om=\{(\nu_1,\cdots,\nu_m)\in \R^m: \sum_\a \nu_\a=u\}$ and $f:\Om\ra \R$
\begin{equation*}
(\nu_1,\cdots,\nu_m)\mapsto \sum_\a \f{2\nu_\a(1+\nu_\a)^2}{3\nu_\a^2+4\nu_\a+C}\qquad \mbox{ where } C=\f{1}{4}u^2.
\end{equation*}
Then it is easy to see that $\sup f$ is an upper bound of $du(\td{\n}u)$, since $u=\sum_\a\tan^2 \th_\a=\sum_\a \la_\a^2.$

Obviously $\Om$ is compact and convex, $T_\si(\Om)=\Om$ for every $\si\in \mathscr{S}(m)$, $f$ is a symmetric function and a direct calculation
shows
\begin{equation*}
\f{\p^2 f}{\p \nu_\a\p \nu_\be}=\f{-4\big[(3C-1)\nu_\a^3+6C\nu_\a^2+(9C-3C^2)\nu_\a+4C-2C^2\big]}{(3\nu_\a^2+4\nu_\a+C)^3}\de_{\a\be}.
\end{equation*}
To show $(D^2 f)\leq 0$ when $u\in (0,2]$, it is sufficient to prove $F:[0,u]\ra \R$
\begin{equation*}
t\mapsto (3C-1)t^3+6Ct^2+(9C-3C^2)t+4C-2C^2
\end{equation*}
is a nonnegative function, where $C=\f{u^2}{4}\in (0,1]$. If $F$ attains its minimum at $t_0\in (0,u)$, then
\begin{eqnarray}
&&0=F'(t_0)=3(3C-1)t_0^2+12Ct_0+9C-3C^2,\\\label{co1}
&&0\leq F''(t_0)=6(3C-1)t_0+12C.\label{co2}
\end{eqnarray}
On the other hand, when $3C-1\geq 0$, we have $F'(t_0)\geq 9C-3C^2>0$, which causes a contradiction; when $3C-1<0$, from (\ref{co2}), $t_0\leq \f{2C}{1-3C}$,
then $F'(t_0)\geq F'(0)=9C-3C^2>0$, which also causes a contradiction. Therefore
\begin{equation*}
\min_{[0,u]}F=\min\big\{F(0),F(u)\big\}.
\end{equation*}
In conjunction with
\begin{eqnarray*}
F(0)&=&4C-2C^2>0\\
F(u)&=&(3C-1)u^3+6Cu^2+(9C-3C^2)u+4C-2C^2\\
&=&\f{9}{16}u^5+\f{11}{8}u^4+\f{5}{4}u^3+u^2>0,
\end{eqnarray*}
$F$ is a nonnegative function. Thereby applying Lemma \ref{l3}  we have
\begin{equation}\label{es6}
du(\td{\n} u)\leq\sup f= f(\f{u}{m},\cdots,\f{u}{m})=\f{2(u+m)^2}{(3+\f{1}{4}m^2)u+4m}.
\end{equation}
Substituting (\ref{es6}) into (\ref{es5}) gives
$$\Hess(u)\geq \left(2-\f{1}{2}u^2\right)g+\f{(3+\f{1}{4}m^2)u+4m}{2(u+m)^2}du\otimes du.$$
We rewrite the conclusion as the following Proposition.
\begin{pro}
$u$ is a convex function on $B_{JX}(P_0)\subset \U\subset \grs{n}{m}$ and
\begin{equation}\label{es7}
\Hess(u)\geq
\left(2-\f{1}{2}u^2\right)g+\f{(3+\f{1}{4}p^2)u+4p}{2(u+p)^2}du\otimes
du
\end{equation}
on $\{P\in \U:u(P)\leq 2\}$,
where $p=min(n,m)$.
\end{pro}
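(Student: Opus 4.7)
The strategy is to mirror the proof of the preceding proposition on $v$: start from the crude estimate (\ref{es2}), subtract off the $g$-part to get a nonnegative definite residual quadratic form, decompose the tangent space in a way compatible with both the residual form and $du$, and then apply Lemma \ref{l2}. Concretely, set $\td{h} := \Hess(u) - \big(2 - \f{1}{2} u^2\big)g$, which by (\ref{es2}) is nonnegative definite on $T_P \grs{n}{m}$. Take $V_1 = \bigoplus_\a E_{\a\a}$ and $V_2 = \bigoplus_{i\neq\a} E_{i\a}$. From the explicit formula (\ref{He4}) one reads off $\td{h}(V_1,V_2)=0$ together with the positive definiteness of $\td{h}|_{V_1}$, while (\ref{d9}) gives $du \in V_1^*$ so that $du(V_2)=0$. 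Lemma \ref{l2} then produces $\Hess(u) \geq \big(2 - \f{1}{2} u^2\big)g + \big(du(\td{\n} u)\big)^{-1} du\otimes du$.

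Solving $\td{h}(\td{\n} u, X) = du(X)$ for $X\in V_1$ yields the explicit expression for $\td{\n} u$ and hence $du(\td{\n} u) = \sum_\a 2\la_\a^2 (1+\la_\a^2)^2 / \big(3\la_\a^4 + 4\la_\a^2 + \f{1}{4} u^2\big)$. To complete (\ref{es7}) I need a \emph{sharp} upper bound of this sum under the constraint $\sum_\a \la_\a^2 = u$. I would change variables to $\nu_\a = \la_\a^2$, so the task reduces to maximizing the symmetric function $f(\nu) = \sum_\a 2\nu_\a(1+\nu_\a)^2/(3\nu_\a^2 + 4\nu_\a + C)$ with $C = \f{u^2}{4}$ over the compact convex symmetric simplex $\Om = \{\nu_\a \geq 0,\ \sum_\a \nu_\a = u\}$. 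Lemma \ref{l3} will then deliver the maximum at the barycenter $\nu_\a = u/m$, giving the value $2(u+m)^2/\big[(3 + \f{1}{4} m^2)u + 4m\big]$.

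The main obstacle is verifying the hypothesis $(D^2 f) \leq 0$ required by Lemma \ref{l3}, for $u\in(0,2]$, i.e.\ $C\in(0,1]$. A direct computation shows the Hessian of $f$ is diagonal with sign controlled by the cubic $F(t) = (3C-1)t^3 + 6Ct^2 + (9C - 3C^2)t + 4C - 2C^2$, and I must show $F\geq 0$ on $[0,u]$. I would exclude interior minima by contradiction: any such $t_0\in(0,u)$ satisfies $F'(t_0)=0$ and $F''(t_0)\geq 0$. In the case $3C-1\geq 0$, $F'(t_0) \geq 9C - 3C^2 > 0$ already contradicts $F'(t_0)=0$; in the case $3C-1<0$, $F''(t_0)\geq 0$ forces $t_0 \leq 2C/(1-3C)$, on which interval $F'$ exceeds $F'(0) = 9C-3C^2 > 0$, again a contradiction. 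Thus $\min_{[0,u]}F = \min\{F(0),F(u)\}$, and one checks both endpoint values are manifestly positive (in fact $F(u)$ is a sum of positive monomials in $u$).

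Combining the concavity of $f$ with Lemma \ref{l3}, the desired upper bound for $du(\td{\n} u)$ follows. Substituting it back into the Lemma \ref{l2} estimate yields (\ref{es7}) with $p=m$ under the working assumption $n\geq m$; the statement with $p=\min(n,m)$ follows by the symmetric roles of $n$ and $m$ in $\grs{n}{m}$. Convexity of $u$ on $B_{JX}(P_0)$ itself does not need to be reproved here, since it was already established in the discussion around (\ref{eq4}) via the sign of $1-\la_\a\la_\be$.
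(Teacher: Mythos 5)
Your proposal is correct and follows essentially the same route as the paper's own proof: the same residual form $\td{h}=\Hess(u)-\big(2-\f{1}{2}u^2\big)g$ with the splitting $V_1\oplus V_2$ and Lemma \ref{l2}, the same value of $du(\td{\n}u)$, and the same application of Lemma \ref{l3} to the symmetric function on the simplex, including the identical cubic $F$, the interior-minimum contradiction in the two cases $3C-1\geq 0$ and $3C-1<0$, and the endpoint checks. Your explicit inclusion of $\nu_\a\geq 0$ in $\Om$ (needed for compactness) and the closing remark on $p=\min(n,m)$ by symmetry are only minor tidyings of what the paper does implicitly.
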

\bigskip\bigskip

\Section{The construction of auxiliary functions}{The construction
of auxiliary functions}
\medskip

Let
\begin{equation}
h_1=v^{-k}(2-v)^k,
\end{equation}
 where $k>0$ to be chosen, then
\begin{eqnarray*}\aligned
h'_1=&-kv^{-k-1}(2-v)^k-kv^{-k}(2-v)^{k-1}\\
=&-2kv^{-k-1}(2-v)^{k-1},\\
h''_1=&2k(k+1)v^{-k-2}(2-v)^{k-1}+2k(k-1)v^{-k-1}(2-v)^{k-2}\\
=&4kv^{-k-2}(2-v)^{k-2}(k+1-v).
\endaligned
\end{eqnarray*}
Here $'$ denotes derivative with respect to $v$. Hence, from
(\ref{es4})
\begin{eqnarray}\label{He5}\aligned
&\Hess(h_1)=-2kv^{-k-1}(2-v)^{k-1}\Hess(v)+4kv^{-k-2}(2-v)^{k-2}(k+1-v)dv\otimes dv\\
&\hskip0.6in\leq-2kv^{-k}(2-v)^k g-\\
&\quad-2kv^{-k-2}(2-v)^{k-2}\Big[\f{(v-1)(2-v)}{p(v^{\f{2}{p}}-1)}+\f{p+1}{p}(2-v)-2(k+1-v)\Big]dv\otimes
dv.
\endaligned
\end{eqnarray}
Please note that $\f{v-1}{v^{\f{2}{p}}-1}$ is an increasing function on $[1,2]$: it is easily seen when $p$ is even, since
$$\f{v-1}{v^{\f{2}{p}}-1}=1+v^{\f{2}{p}}+v^{\f{4}{p}}+\cdots+v^{1-\f{2}{p}};$$
otherwise, when $p$ is odd,
$$\f{v-1}{v^{\f{2}{p}}-1}=\f{v^{1-\f{1}{p}}-1}{v^{\f{2}{p}}-1}+\f{v-v^{1-\f{1}{p}}}{v^{\f{2}{p}}-1}=1+v^{\f{2}{p}}+v^{\f{4}{p}}+\cdots+v^{1-\f{3}{p}}+\f{v^{1-\f{1}{p}}}{v^{\f{1}{p}}+1}$$
it follows from
$$\Big(\f{v^{1-\f{1}{p}}}{v^{\f{1}{p}}+1}\Big)'=\f{1-\f{2}{p}+(1-\f{1}{p})v^{-\f{1}{p}}}{(v^{\f{1}{p}}+1)^2}\geq 0.$$
Hence
$$\f{v-1}{v^{\f{2}{p}}-1}\geq \f{p}{2},$$
and moreover
\begin{eqnarray*}\aligned
\f{(v-1)(2-v)}{p(v^{\f{2}{p}}-1)}&+\f{p+1}{p}(2-v)-2(k+1-v)\\
\geq&\big(\f{1}{2}+\f{p+1}{p}\big)(2-v)-2(k+1-v)\\
=&\big(\f{1}{2}-\f{1}{p}\big)v+\big(3+\f{2}{p}\big)-2(k+1)\\
\geq&\f{3}{2}+\f{1}{p}-2k.\endaligned
\end{eqnarray*}
Now we take
\begin{equation}
k=\f{3}{4}+\f{1}{2p},
\end{equation}
then $\f{(v-1)(2-v)}{p(v^{\f{2}{p}}-1)}+\f{p+1}{p}(2-v)-2(k+1-v)\geq 0$ and then
(\ref{He5}) becomes
\begin{equation}
\Hess(h_1)\leq -2kh_1\ g=-\left(\f{3}{2}+\f{1}{p}\right)h_1\ g.
\end{equation}

Denote
\begin{equation}\label{h2}
h_2=h_1^{-\f{6p}{3p+2}}=v^{\f{3}{2}}(2-v)^{-\f{3}{2}},
\end{equation}
then
\begin{eqnarray}\aligned
\Hess(h_2)=&-\f{6p}{3p+2}h_1^{-\f{6p}{3p+2}-1}\Hess(h_1)+\f{6p}{3p+2}\big(\f{6p}{3p+2}+1\big)h_1^{-\f{6p}{3p+2}-2}dh_1\otimes dh_1\\
\geq& 3h_1^{-\f{6p}{3p+2}}\ g+\big(\f{3}{2}+\f{1}{3p}\big)h_1^{\f{6p}{3p+2}}dh_2\otimes dh_2\\
=& 3h_2\ g+\big(\f{3}{2}+\f{1}{3p}\big)h_2^{-1}dh_2\otimes dh_2.
\endaligned
\end{eqnarray}

Let
\begin{equation}
h_3=(u+\a)^{-1}(2-u),
\end{equation}
where $\a>0$ to be chosen. A direct calculation shows
\begin{eqnarray*}\aligned
h'_3=&-(u+\a)^{-2}(2-u)-(u+\a)^{-1}\\
=&-(2+\a)(u+\a)^{-2},\\
h''_3=&2(2+\a)(u+\a)^{-3}.\endaligned
\end{eqnarray*}
Here $'$ denotes derivative with respect to $u$. Combining with (\ref{es7}), we have
\begin{eqnarray}\label{He6}\aligned
\Hess (h_3)=&-(2+\a)(u+\a)^{-2}\Hess(u)+2(2+\a)(u+\a)^{-3}du\otimes du\\
\leq&-\f{(2+\a)(u+2)}{2(u+\a)}h_3\ g\\
&-(2+\a)(u+\a)^{-3}\Big[\f{(u+\a)\big((3+\f{1}{4}p^2)u+4p\big)}{2(u+p)^2}-2\Big]du\otimes
du.
\endaligned
\end{eqnarray}
Choose
\begin{equation}
\a=p,
\end{equation}
then
\begin{eqnarray*}
\f{(u+\a)\big((3+\f{1}{4}p^2)u+4p\big)}{2(u+p)^2}-2= \f{\big(3+\f{1}{4}p^2\big)u+4p}{2(u+p)}-2\geq 2-2\geq 0,
\end{eqnarray*}
and
$$\f{(2+\a)(u+2)}{2(u+\a)}\geq \f{2+p}{p}=1+\f{2}{p}.$$
Thereby (\ref{He6}) becomes
\begin{equation}
\Hess(h_3)\leq -\big(1+\f{2}{p}\big)h_3\ g.
\end{equation}

Denote
\begin{equation}
h_4=h_3^{-\f{3p}{p+2}}=(u+p)^{\f{3p}{p+2}}(2-u)^{-\f{3p}{p+2}},
\end{equation}
then
\begin{eqnarray}\aligned
\Hess(h_4)=&-\f{3p}{p+2}h_3^{-\f{3p}{p+2}-1}\Hess(h_3)+\f{3p}{p+2}\big(\f{3p}{p+2}+1\big)h_3^{-\f{3p}{p+2}-2}dh_3\otimes dh_3\\
\geq&3h_3^{-\f{3p}{p+2}}\ g+\big(\f{4}{3}+\f{2}{3p}\big)h_3^{\f{3p}{p+2}}dh_4\otimes dh_4\\
=&3h_4\ g+\big(\f{4}{3}+\f{2}{3p}\big)h_4^{-1}dh_4\otimes dh_4.
\endaligned
\end{eqnarray}

Let $M$ be an $n$-dimensional submanifold in $\R^{n+m}$ with $m\ge
2.$ The Gauss map $ \gamma : M \to \grs{n}{m} $ is defined by
$$
 \g (x) = T_x M \in \grs{n}{m}
$$
via the parallel translation in $ \ir{m+n} $ for arbitrary $ x \in M$. The energy density of the Gauss
map (see \cite{x1} Chap.3, \S 3.1) is
$$e(\g)=\f{1}{2}\left<\g_*e_i,\g_*e_i\right>=\f{1}{2}|B|^2.$$
Ruh-Vilms proved  that the mean curvature vector of $M$ is parallel
if and only if its Gauss map is a harmonic map \cite{r-v}.

If the Gauss image of $M$ is contained in $\{P\in \U\subset \grs{n}{m}:v(P)<2\}$, then the composition function $\td{h}_1=h_1\circ \g$ of $h_1$
with the Gauss map $\g$ defines a function on $M$.
Using composition formula, we have
\begin{equation}
\aligned \De \td{h}_1&=\Hess(h_1)(\g_* e_i, \g_* e_i)+d h_1(\tau(\g))\\
&\leq -\big(\f{3}{2}+\f{1}{p}\big) |B|^2\td{h}_1,
\endaligned\label{dh1}
\end{equation}
where $\tau(\g)$ is the tension field of the Gauss map, which is
zero, provided $M$ has parallel mean curvature by the Ruh-Vilms
theorem mentioned above.
Similarly, for $\td{h}_2=h_2\circ \g$ defined on $M$, we have
\begin{equation}
\aligned \De \td{h}_2&=\Hess(h_2)(\g_* e_i, \g_* e_i)+d h_2(\tau(\g))\\
&\geq 3\ \td{h}_2 |B|^2+\big(\f{3}{2}+\f{1}{3p}\big)\td{h}_2^{-1}|\n \td{h}_2|^2.
\endaligned\label{dh2}
\end{equation}

If the Gauss image of $M$ is contained in $\{P\in \U\subset \grs{n}{m}:u(P)<2\}$, we can defined composition function
$\td{h}_3=h_3\circ \g$ and $\td{h}_4=h_4\circ \g$ on $M$. Again using composition formula, we obtain
\begin{equation}\label{dh3}
\De \td{h}_3\leq -\Big(1+\f{2}{p}\Big) |B|^2\td{h}_3
\end{equation}
and
\begin{equation}\label{dh4}
\De \td{h}_4\geq  3\ \td{h}_4
|B|^2+\Big(\f{4}{3}+\f{2}{3p}\Big)\td{h}_4^{-1}|\n \td{h}_4|^2.
\end{equation}

With the aid of $\td{h}_1$ and $\td{h}_3$, we immediately have the following lemma.
\begin{lem}\label{l4}
Let $M$ be an $n$-dimensional minimal submanifold of $\ir{n+m}$
($M$ needs not be complete), if the Gauss image of $M$ is
contained in $\{P\in \U\subset \grs{n}{m}:v(P)<2\}$ (or respectively, $\{P\in \U\subset \grs{n}{m}:u(P)<2\}$),
then we have
\begin{equation}\label{es8}\aligned
\int_M |\n \phi|^2&*1\geq \big(\f{3}{2}+\f{1}{p}\big)\int_M |B|^2\phi^2*1\\
\Big(\mbox{or respectively, }&\int_M |\n \phi|^2*1\geq \big(1+\f{2}{p}\big)\int_M |B|^2\phi^2*1\Big)\endaligned
\end{equation}
for any function $\phi$ with compact support $D\subset M$.
\end{lem}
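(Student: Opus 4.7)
The plan is to reduce the lemma to a standard Bochner/integration-by-parts argument built on the two differential inequalities $(\ref{dh1})$ and $(\ref{dh3})$ that have already been derived from the Hessian estimates (\ref{es4}) and (\ref{es7}). Since $M$ is minimal, the Ruh--Vilms theorem ensures that the tension field $\tau(\gamma)$ of the Gauss map vanishes, so both inequalities (\ref{dh1}) and (\ref{dh3}) reduce to pure Laplacian estimates of the form $\Delta \tilde h_i \le -c\,|B|^2\,\tilde h_i$ with $c=\tfrac32+\tfrac1p$ (for $i=1$) or $c=1+\tfrac2p$ (for $i=3$).

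First I would verify that the auxiliary functions are strictly positive on the relevant region. Because $v=\prod_\alpha \sec\theta_\alpha\ge 1$ and the hypothesis places the Gauss image of $M$ in $\{v<2\}$, we have $\tilde h_1 = v^{-k}(2-v)^k>0$ everywhere on $M$. Similarly $\tilde h_3=(u+p)^{-1}(2-u)>0$ on $\{u<2\}$. This positivity lets me divide by $\tilde h_i$ and rewrite the inequality as
\begin{equation*}
-\frac{\Delta \tilde h_i}{\tilde h_i}\;\ge\; c\,|B|^2.
\end{equation*}

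Next, for any $\phi\in C_c^\infty(M)$ with support $D$, I multiply by $\phi^2$ and integrate over $M$:
\begin{equation*}
c\int_M |B|^2\phi^2\,*1\;\le\;-\int_M \frac{\phi^2}{\tilde h_i}\,\Delta \tilde h_i\,*1
\;=\;\int_M \nabla\!\left(\frac{\phi^2}{\tilde h_i}\right)\!\cdot\nabla \tilde h_i\,*1,
\end{equation*}
where the compact support of $\phi$ kills all boundary terms in the integration by parts. Expanding the gradient gives
\begin{equation*}
\int_M \nabla\!\left(\frac{\phi^2}{\tilde h_i}\right)\!\cdot\nabla \tilde h_i\,*1
= \int_M \frac{2\phi\,\nabla\phi\cdot\nabla\tilde h_i}{\tilde h_i}\,*1 - \int_M \frac{\phi^2|\nabla\tilde h_i|^2}{\tilde h_i^2}\,*1.
\end{equation*}

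Finally I would apply the elementary Cauchy--Schwarz bound $2ab\le a^2+b^2$ with $a=|\nabla\phi|$ and $b=\phi|\nabla\tilde h_i|/\tilde h_i$, which yields
\begin{equation*}
\int_M \frac{2\phi\,\nabla\phi\cdot\nabla\tilde h_i}{\tilde h_i}\,*1
\le \int_M |\nabla\phi|^2\,*1 + \int_M \frac{\phi^2|\nabla\tilde h_i|^2}{\tilde h_i^2}\,*1.
\end{equation*}
The second term on the right cancels exactly the negative quadratic term above, leaving $c\int_M|B|^2\phi^2\le\int_M|\nabla\phi|^2$, which is precisely (\ref{es8}). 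The same argument applied verbatim to $\tilde h_3$ in place of $\tilde h_1$ yields the parenthetical statement. There is no substantial obstacle here: the nontrivial work is entirely contained in the Hessian estimates of Section 3, and the present lemma is a direct analytic consequence.
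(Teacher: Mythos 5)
Your proposal is correct and follows exactly the approach the paper intends: since $M$ is minimal, Ruh--Vilms makes $\tau(\gamma)=0$, so (\ref{dh1}) and (\ref{dh3}) give $\Delta\tilde h_i\le -c|B|^2\tilde h_i$ with $\tilde h_i>0$ on the given Gauss-image region, and the stability-type inequality follows by the standard multiplication by $\phi^2/\tilde h_i$, integration by parts, and Cauchy--Schwarz. The paper omits these details, referring to \cite{x3} and \cite{x-y}, where precisely this argument is carried out, so there is no substantive difference between your route and the paper's.
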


\begin{rem}
For a stable minimal hypersurface there is the stability inequality,
which is one of main ingredient for Schoen-Simon-Yau's curvature
esimates for stable minimal hypersurfaces. For minimal submanifolds
with the Gauss image restriction we have stronger inequality as
shown in (\ref{es8}). Our proof is similar to \cite{x3} and \cite{x-y}, so we omit the detail of it.
\end{rem}

\bigskip\bigskip

\Section{Curvature estimates}{Curvature estimates }
\medskip

We are now in a position to carry out the curvature estimates of
Schoen-Simon-Yau type.

Let $M$ be an $n$-dimensional minimal submanifold in $\ir{n+m}$.
Assume that the estimate
\begin{equation}\label{lp1}
\int_M |\n \phi|^2*1\geq \la\int_M |B|^2\phi^2*1
\end{equation}
holds for arbitrary function $\phi$ with compact support $D\subset M$, where $\la$ is a positive constant.

Replacing $\phi$ by $|B|^{1+q}\phi$ in (\ref{lp1}) gives
\begin{equation}\label{lp2}\aligned
\int_M |B|^{4+2q}&\phi^2*1\leq \la^{-1}\int_M \big|\n (|B|^{1+q}\phi)\big|^2*1\\
&=\la^{-1}(1+q)^2\int_M |B|^{2q}\big|\n|B|\big|^2\phi^2*1+\la^{-1}\int_M |B|^{2+2q}|\n \phi|^2*1\\
&+2\la^{-1}(1+q)\int_M |B|^{1+2q}\n|B|\cdot \phi\n\phi *1.\endaligned
\end{equation}
Using Bochner technique, the estimate done in \cite{l-l}\cite{c-x}, and the Kato-type inequality derived in \cite{x-y},
we obtain
\begin{equation}\label{es9}
\De |B|^2\geq 2\big(1+\f{2}{mn}\big)\big|\n |B|\big|^2-3|B|^4.
\end{equation}
(For the detail, see \cite{x-y} Section 2.) It is equivalent to
\begin{equation}\label{lp3}
\f{2}{mn}\big|\n|B|\big|^2\leq |B|\De |B|+\f{3}{2}|B|^4.
\end{equation}
Multiplying $|B|^{2q}\phi^2$ with both sides of (\ref{lp3}) and
integrating by parts, we have
\begin{equation}\label{lp4}\aligned
\f{2}{mn}&\int_M |B|^{2q}\big|\n |B|\big|^2\phi^2*1\\
&\qquad\le-(1+2q)\int_M |B|^{2q}\big|\n|B|\big|^2\phi^2*1\\
&\qquad\qquad-2\int_M |B|^{1+2q}\n|B|\cdot \phi\n\phi*1
+\f{3}{2}\int_M |B|^{4+2q}\phi^2 *1. \endaligned
\end{equation}
By multiplying $\f{3}{2}$ with both sides of (\ref{lp2}) and then
adding up both sides of it and (\ref{lp4}), we have
\begin{equation}\label{lp5}\aligned
&\big(\f{2}{mn}+1+2q-\f{3}{2}\la^{-1}(1+q)^2\big)\int_M |B|^{2q}\big|\n |B|\big|^2\phi^2*1\\
&\qquad\leq\f{3}{2}\la^{-1}\int_M |B|^{2+2q}|\n\phi|^2*1
+\big(3\la^{-1}(1+q)-2\big)\int_M |B|^{1+2q}\n|B|\cdot \phi\n\phi
*1.
\endaligned\end{equation}
By using Young's inequality, (\ref{lp5}) becomes
\begin{equation}\label{lp6}
\aligned
&\big(\f{2}{mn}+1+2q-\f{3}{2}\la^{-1}(1+q)^2-\ep\big)\int_M |B|^{2q}\big|\n |B|\big|^2\phi^2*1\\
&\hskip1in\leq C_1(\ep,\la,q)\int_M |B|^{2+2q}|\n\phi|^2*1.\endaligned
\end{equation}
If
\begin{equation}\label{co3}
\la>\f{3}{2}\big(1-\f{2}{mn}\big),
\end{equation}
then
\begin{equation*}
\f{2}{mn}+1+2q-\f{3}{2}\la^{-1}(1+q)^2>0
\end{equation*}
whenever
\begin{equation}
q\in \Big[0,-1+\f{2}{3}\la+\f{1}{3}\sqrt{4\la^2-6\big(1-\f{2}{mn}\big)\la}\ \Big).
\end{equation}
Thus we can choose $\ep$ sufficiently small, such that
\begin{equation}\label{lp7}
\int_M |B|^{2q}\big|\n |B|\big|^2\phi^2*1\leq C_2 \int_M
|B|^{2+2q}|\n\phi|^2*1
\end{equation}
where $C_2$ only depends on $n$, $m$, $\la$ and $q$.

Combining with (\ref{lp2}) and (\ref{lp7}), we can derive
\begin{equation}\label{lp8}
\int_M |B|^{4+2q}\phi^2*1\leq C_3(n,m,\la,q)\int_M |B|^{2+2q}|\n
\phi|^2*1
\end{equation}
by again using Young's inequality.

By replacing $\phi$ by $\phi^{2+q}$ in (\ref{lp8}) and then using H\"{o}lder inequality, we have
\begin{eqnarray}\label{lp9}
\int_M |B|^{4+2q}\phi^{4+2q}*1\leq C\int_M |\n\phi|^{4+2q}*1.
\end{eqnarray}
where $C$ is a constant only depending on $n$, $m$, $\la$ and $q$.

Similarly, replacing $\phi$ by $\phi^{1+q}$ in (\ref{lp8}) and then again using H\"{o}lder inequality yields
\begin{eqnarray}\label{lp10}
\int_M |B|^{4+2q}\phi^{2+2q}*1\leq C'\int_M |B|^2|\n\phi|^{2+2q}*1.
\end{eqnarray}
where $C'$ is a constant only depending on $n$, $m$, $\la$ and $q$.

Let $r$ be a function on $M$ with $|\n r|\le 1$. For any $R\in [0,
R_0]$, where $R_0=\sup_Mr$, suppose
$$M_R=\{x\in M,\quad r\le R\}$$
is compact.

(\ref{lp9}) and Lemma \ref{l4} enable us to prove the following results by taking $\phi\in C_c^\infty (M_R)$ to be the
standard cut-off function such that $\phi\equiv 1$ in $M_{\th R}$
and  $|\n \phi|\leq C(1-\th)^{-1}R^{-1}$.

\begin{thm}\label{t1}
Let $M$ be an $n$-dimensional minimal submanifolds of $\ir{n+m}$.
If  the Gauss image of $M_R$ is contained in $\{P\in \U\subset \grs{n}{m}:v(P)<2\}$, then we have
the estimate
\begin{equation}\label{lp11}
\big\||B|\big\|_{L^s(M_{\th R})}\leq
C(n,m,s)(1-\th)^{-1}R^{-1}\text{Vol}(M_R)^{\f{1}{s}}
\end{equation}
for arbitrary $\th\in (0,1)$ and
$$s\in \left[4,4+\f{4}{3p}+\f{2}{3}\sqrt{\big(3+\f{2}{p}\big)\big(\f{6}{mn}+\f{2}{p}\big)}\ \right).$$

If $p\leq 4$, and the Gauss image of $M_R$ is contained in $\{P\in \U\subset \grs{n}{m}:u(P)<2\}$, then (\ref{lp11}) still holds
for arbitrary $\th\in (0,1)$ and
$$s\in \left[4,2+\f{4}{3}+\f{8}{3p}+\f{2}{3}\sqrt{\big(1+\f{2}{p}\big)\big(\f{12}{mn}+\f{8}{p}-2\big)}\ \right).$$
\end{thm}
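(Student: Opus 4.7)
The plan is to combine Lemma \ref{l4} with the integral inequality (\ref{lp9}) derived in the preceding discussion, and then insert a standard cutoff function adapted to $M_R$.

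First, Lemma \ref{l4} supplies the Sobolev-type estimate (\ref{lp1}) with $\la = \f{3}{2} + \f{1}{p}$ when the Gauss image lies in $\{v<2\}$, and with $\la = 1 + \f{2}{p}$ when it lies in $\{u<2\}$. I would verify the admissibility condition (\ref{co3}) in each case: in the $v$-case, $\la \geq \f{3}{2} > \f{3}{2}(1 - \f{2}{mn})$ trivially; in the $u$-case the hypothesis $p \leq 4$ gives $\f{2}{p} \geq \f{1}{2}$, whence $\la - \f{3}{2}(1 - \f{2}{mn}) \geq \f{3}{mn} > 0$. Once (\ref{co3}) is in force, the chain of inequalities (\ref{lp2})--(\ref{lp9}) applies for every admissible exponent
\[ q \in \Big[0,\; -1 + \f{2}{3}\la + \f{1}{3}\sqrt{4\la^2 - 6(1 - \f{2}{mn})\la}\,\Big). \]

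Second, to deduce (\ref{lp11}) I would choose $\phi \in C_c^\infty(M_R)$ with $\phi \equiv 1$ on $M_{\th R}$, $0 \leq \phi \leq 1$, and $|\n \phi| \leq C(1-\th)^{-1}R^{-1}$; such $\phi$ exists by the assumption $|\n r| \leq 1$ together with compactness of $M_R$. Setting $s = 4 + 2q$ and substituting into (\ref{lp9}) gives
\begin{equation*}
\int_{M_{\th R}} |B|^s *1 \leq \int_M |B|^s \phi^s *1 \leq C \int_M |\n \phi|^s *1 \leq C(n,m,s)(1-\th)^{-s} R^{-s}\,\text{Vol}(M_R),
\end{equation*}
and taking $s$-th roots yields precisely the asserted inequality (\ref{lp11}). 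The admissible range of $s = 4 + 2q$ is the image of the interval above under $q \mapsto 4 + 2q$, namely $\big[4,\, 2 + \f{4}{3}\la + \f{2}{3}\sqrt{4\la^2 - 6(1-\f{2}{mn})\la}\,\big)$. Using the factorization $4\la^2 - 6(1 - \f{2}{mn})\la = 2\la\bigl(2\la - 3 + \f{6}{mn}\bigr)$, one checks that for $\la = \f{3}{2} + \f{1}{p}$ the radicand collapses to $(3 + \f{2}{p})(\f{2}{p} + \f{6}{mn})$, reproducing the first stated range; for $\la = 1 + \f{2}{p}$ it collapses to $(1 + \f{2}{p})(\f{8}{p} + \f{12}{mn} - 2)$, reproducing the second.

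The main technical point, and the one I would flag as the principal obstacle, is that the entire passage from (\ref{lp1}) to (\ref{lp9}) is conditional on the strict inequality (\ref{co3}). In the $v$-case it holds unconditionally, but in the $u$-case the hypothesis $p \leq 4$ is indispensable: for $p \geq 5$ and moderately large $mn$ the quantity $4\la^2 - 6(1 - \f{2}{mn})\la$ becomes negative, and the admissible range of $q$ collapses to a single point, so no Schoen--Simon--Yau-type estimate can be extracted in this way. Beyond verifying this threshold, the argument is a routine transcription of the technique already carried out in \cite{x-y}, so it is natural to present only the outline, as the text suggests.
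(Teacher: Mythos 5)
Your proposal is correct and follows exactly the route the paper intends: Lemma \ref{l4} supplies the stability-type inequality (\ref{lp1}) with $\la=\f{3}{2}+\f{1}{p}$ (respectively $\la=1+\f{2}{p}$, where $p\leq 4$ guarantees condition (\ref{co3})), the chain (\ref{lp2})--(\ref{lp9}) then applies, and the standard cut-off $\phi$ on $M_R$ with $s=4+2q$ yields (\ref{lp11}), with your algebra for the two ranges of $s$ matching the stated bounds. No discrepancies with the paper's argument.
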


We can also fulfil the curvature estimates of Ecker-Huisken type.

Assume that $h$ is a positive function on $M$ satisfying the
following estimate
\begin{equation}\label{co4}
\De h\ge 3h\ g+c_0 h^{-1}dh\otimes dh,
\end{equation}
where
$$c_0>\f{3}{2}-\f{1}{mn}$$
is a positive constant.

We compute from (\ref{co4}) and (\ref{es9}):
\begin{eqnarray*}\aligned
&\De\big(|B|^{2s}h^q\big)\\
\geq& 3(q-s)|B|^{2s+2}h^q+2s(2s-1+\f{2}{mn})|B|^{2s-2}\big|\n|B|\big|^2 h^q
+q(q+c_0-1)|B|^{2s}h^{q-2}|\n h|^2\\
&+4sq|B|^{2s-1}\n|B|\cdot h^{q-1}\n h.\endaligned
\end{eqnarray*}
By Young's inequality, when $2s(2s-1+\f{2}{mn})\cdot
q(q+c_0-1)\geq (2sq)^2$, i.e.,
\begin{equation}
q\geq s\geq \f{1}{2}-\f{1}{mn}+\f{1}{c_0-1}\big(\f{1}{2}-\f{1}{mn}\big)q,
\end{equation}
the inequality
\begin{equation}
\De\big(|B|^{2s}h^q\big)\geq 3(q-s)|B|^{2s+2}h^q
\end{equation}
holds. Especially,
\begin{equation}
\De\big(|B|^{s-1}h^{\f{s}{2}}\big)\geq \f{3}{2}|B|^{s+1}h^{\f{s}{2}}
\end{equation}
whenever
\begin{equation}
s\geq \f{2-\f{2}{mn}}{1-\f{1}{c_0-1}(\f{1}{2}-\f{1}{mn})}.
\end{equation}

Let $\e$ be a smooth function with compact support. Integrating by
parts in conjunction with Young's inequality lead to
\begin{eqnarray}\aligned
\int_M |B|^{2s}h^s \eta^{2s}*1\leq&\f{2}{3}\int_M |B|^{s-1}h^{\f{s}{2}}\eta^{2s}\De\big(|B|^{s-1}h^{\f{s}{2}}\big)*1\\
=&-\f{2}{3}\int_M \Big|\n \big(|B|^{s-1}h^{\f{s}{2}}\big)\Big|^2 \eta^{2s}*1\\
&-\f{2}{3}\int_M |B|^{s-1}h^{\f{s}{2}}\cdot 2s\eta^{2s-1}\n\eta\cdot \n\big(|B|^{s-1}h^{\f{s}{2}}\big)*1\\
\leq&\f{2}{3}s^2\int_M |B|^{2s-2}h^s\eta^{2s-2}|\n \eta|^2*1.\endaligned\label{lp12}
\end{eqnarray}
By H\"{o}lder inequality,
\begin{equation}\label{lp13}
\int_M |B|^{2s-2} h^s \eta^{2s-2}|\n \eta|^2*1
\leq\Big(\int_M
|B|^{2s}h^s\eta^{2s}*1\Big)^{\f{s-1}{s}}\Big(\int_M h^s|\n
\eta|^{2s}*1\Big)^{\f{1}{s}}.
\end{equation}
Substituting (\ref{lp13}) into (\ref{lp12}), we finally arrive at
\begin{equation}
\Big(\int_M |B|^{2s}h^s\eta^{2s}*1\Big)^{\f{1}{s}}\leq
\f{2}{3}s^2\Big(\int_M h^s|\n
\eta|^{2s}*1\Big)^{\f{1}{s}}.\label{lp14}
\end{equation}

Take $\eta\in C_c^\infty (M_R)$ to be the standard cut-off function
such that $\eta\equiv 1$ in $M_{\th R}$ and  $|\n \eta|\leq C
(1-\th)^{-1}R^{-1}$; then  from (\ref{lp14}) we have the following
estimate.

\begin{thm}\label{t2}
Let $M$ be an $n$-dimensional minimal submanifolds of $\ir{n+m}$. If
there exists a positive function $h$ on $M$ satisfying (\ref{co4}), then there exists
$C_1=C_1(n, m,c_0),$ such that
\begin{equation}\label{lp15}
\big\||B|^2 h\big\|_{L^s(M_{\th R})}\leq
C_2(s)(1-\th)^{-2}R^{-2}\big\|h\big\|_{L^s(M_R)}
\end{equation}
whenever $s\geq C_1$ and $\th\in (0, 1)$.
\end{thm}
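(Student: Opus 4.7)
The plan is to implement the Ecker--Huisken Moser iteration adapted to higher codimension, using the auxiliary function $h$ as a ``shielding'' factor: although $|B|^{2}$ alone satisfies the Simons-type inequality (\ref{es9}) with an obstructive $-3|B|^{4}$ term, multiplying by a suitable power of $h$ turns a monomial in $|B|$ and $h$ into a subharmonic quantity from which a Caccioppoli inequality follows.

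\textbf{Step 1 (subharmonic monomial).} First I would compute $\De(|B|^{2s}h^{q})$ by the product rule, inserting (\ref{es9}) to bound $\De|B|^{2}$ from below and (\ref{co4}) to bound $\De h$ from below. The two nonnegative squared-gradient terms that appear are $2s(2s-1+2/(mn))|B|^{2s-2}|\nabla|B||^{2}h^{q}$ and $q(q+c_{0}-1)|B|^{2s}h^{q-2}|\nabla h|^{2}$, while the cross term $4sq\,|B|^{2s-1}h^{q-1}\nabla|B|\cdot\nabla h$ can be absorbed into them by Young's inequality exactly when $2s(2s-1+2/(mn))\cdot q(q+c_{0}-1)\ge(2sq)^{2}$. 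Taking $q=s$ reduces this to a linear inequality in $s$ which is solvable precisely under the hypothesis $c_{0}>3/2-1/(mn)$, and halving the exponents (i.e.\ working with $F:=|B|^{s-1}h^{s/2}$) yields the clean subharmonic inequality $\De F\ge\tfrac{3}{2}|B|^{s+1}h^{s/2}$ whenever $s$ exceeds an explicit threshold $C_{1}(n,m,c_{0})$.

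\textbf{Step 2 (Caccioppoli and cutoff).} Next I would multiply the inequality $\De F\ge\tfrac{3}{2}|B|^{s+1}h^{s/2}$ by a test function $\eta^{2s}$ with $\eta\in C_{c}^{\infty}(M)$ and integrate by parts. The resulting cross term $|B|^{s-1}h^{s/2}\cdot\eta^{2s-1}\nabla\eta\cdot\nabla F$ is absorbed into the gradient energy of $F$ via Young, and H\"older then redistributes the remaining $|B|^{2s-2}h^{s}|\nabla\eta|^{2}$ between $|B|^{2s}h^{s}$ and $h^{s}|\nabla\eta|^{2s}$, producing the inequality (\ref{lp14}). Finally I would specialize $\eta$ to the standard cutoff $\eta\in C_{c}^{\infty}(M_{R})$ with $\eta\equiv1$ on $M_{\th R}$ and $|\nabla\eta|\le C(1-\th)^{-1}R^{-1}$---which exists since $|\nabla r|\le1$---and substitute into (\ref{lp14}) to conclude $\||B|^{2}h\|_{L^{s}(M_{\th R})}\le C_{2}(s)(1-\th)^{-2}R^{-2}\|h\|_{L^{s}(M_{R})}$ with $C_{2}(s)=\tfrac{2}{3}s^{2}C^{2}$.

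The main obstacle is Step 1: one must choreograph the exponents so that the positive $3hg$ contribution from (\ref{co4}) dominates the negative $-3|B|^{4}$ term from (\ref{es9}) while keeping the cross gradient term controllable. This is precisely where the assumption $c_{0}>3/2-1/(mn)$ is forced, and why the magic choice $q=s$ (and then the half-power $F=|B|^{s-1}h^{s/2}$) is needed to make both the Laplacian lower bound and the subsequent Caccioppoli absorption close. Once Step 1 is in place, Step 2 is a standard Moser-type manipulation and the cutoff substitution is mechanical.
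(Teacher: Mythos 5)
Your proposal is correct and follows essentially the same route as the paper: the same computation of $\De\big(|B|^{2s}h^{q}\big)$ from (\ref{es9}) and (\ref{co4}), absorption of the cross term by Young under $2s(2s-1+\f{2}{mn})\,q(q+c_0-1)\geq(2sq)^2$ (forcing $c_0>\f{3}{2}-\f{1}{mn}$), the same subharmonic quantity $|B|^{s-1}h^{s/2}$ with $\De\big(|B|^{s-1}h^{s/2}\big)\geq\f{3}{2}|B|^{s+1}h^{s/2}$ for $s\geq C_1(n,m,c_0)$, and the same test-function/H\"older/cutoff steps yielding (\ref{lp14}) and then (\ref{lp15}). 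The only cosmetic imprecision is attributing the choice to ``$q=s$'': the coefficient $\f{3}{2}$ actually comes from the exponent gap $q-s=\f{1}{2}$ realized by $F=|B|^{s-1}h^{s/2}$, exactly as in the paper, so the argument is unaffected.
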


By (\ref{dh2}) and (\ref{dh4}), if the Gauss image of $M$ is contained in $\{P\in \U\subset \grs{n}{m}:v(P)<2\}$,
or $p\leq 4$ and the Gauss image of $M$ is contained in $\{P\in \U\subset \grs{n}{m}:u(P)<2\}$, there exists
a positive function on $M$, which is $\td{h}_2$ or respectively $\td{h}_4$, satisfying (\ref{co4}). Hence the estimate (\ref{lp15}) holds for both cases.

Furthermore, the mean value inequality for any subharmonic function
on minimal submanifolds in $\ir{m+n}$ (ref. \cite{c-l-y}, \cite{n})
can be applied to yield an estimate of the upper bound of $|B|^2$.
We write the results as the following theorem without detail of proof,
for it is similar to \cite{x-y}. Please note that
$B_R(x)\subset \ir{m+n}$ denotes a ball of radius $R$ centered
at $x\in M$ and its restriction on $M$ is denoted by
$$D_R(x)=B_R(x)\cap M.$$

\begin{thm}\label{t3}
Let $x\in M$, $R>0$ such that the image of $D_R(x)$ under the Gauss
map lies in $\{P\in \U\subset \grs{n}{m}:v(P)<2\}$. Then, there exists $C_1=C_1(n,m)$, such that
\begin{equation}\label{es10}
|B|^{2s} (x)\leq C(n,s)R^{-(n+2s)}(\sup_{D_R(x)}
\td{h}_2)^s\text{Vol}(D_R(x)),
\end{equation}
for arbitrary $s\ge C_1$.

If $p\leq 4$, the image of $D_R(x)$ under the Gauss
map lies in $\{P\in \U\subset \grs{n}{m}:u(P)<2\}$, then there exists $C_2=C_2(n,m)$ such that
\begin{equation}
|B|^{2s} (x)\leq C(n,s)R^{-(n+2s)}(\sup_{D_R(x)}
\td{h}_4)^s\text{Vol}(D_R(x)),
\end{equation} holds for any $s\geq C_2$.
\end{thm}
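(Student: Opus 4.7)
The plan is to combine the $L^s$ curvature estimate of Theorem \ref{t2} with a mean value inequality for nonnegative subharmonic functions on minimal submanifolds in Euclidean space, as given in Choi-Li-Yau \cite{c-l-y} or Nguyen \cite{n}. Under the Gauss image hypothesis, $\td{h}_2$ (respectively $\td{h}_4$ when $p \le 4$) is positive on $D_R(x)$ and satisfies the pointwise inequality (\ref{co4}) with $c_0 = \f{3}{2} + \f{1}{3p}$ (respectively $c_0 = \f{4}{3} + \f{2}{3p}$); in both cases $c_0 > \f{3}{2} - \f{1}{mn}$, so Theorem \ref{t2} applies.

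The first step is to revisit the subharmonicity computation that precedes Theorem \ref{t2}: setting $q = s$ in the inequality
\begin{equation*}
\De(|B|^{2s}\td{h}^{q}) \ge 3(q-s)|B|^{2s+2}\td{h}^{q}
\end{equation*}
shows that $(|B|^2\td{h})^s$ is a nonnegative subharmonic function on $D_R(x)$ whenever $s \ge C_1(n,m)$, where the threshold is the smallest $s$ for which the constraint $s\bigl(1 - \f{1}{c_0-1}(\f{1}{2} - \f{1}{mn})\bigr) \ge \f{1}{2}-\f{1}{mn}$ holds; here $\td{h}$ stands for $\td{h}_2$ or $\td{h}_4$ according to which case is being treated.

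The second step applies the mean value inequality to this subharmonic function on the extrinsic half-ball $D_{R/2}(x) \subset D_R(x)$, yielding
\begin{equation*}
|B|^{2s}(x)\,\td{h}^{s}(x) \le C(n) R^{-n}\int_{D_{R/2}(x)} (|B|^2\td{h})^s *1.
\end{equation*}
Taking the exhaustion $r(y) = |y - x|_{\R^{n+m}}$ on $M$, so that $M_\rho = D_\rho(x)$ and $|\n r| \le 1$ intrinsically, Theorem \ref{t2} with $\th = 1/2$ controls the right hand integral by
\begin{equation*}
\int_{D_{R/2}(x)}|B|^{2s}\td{h}^{s} *1 \le C(n,m,s) R^{-2s}\int_{D_R(x)}\td{h}^{s} *1 \le C R^{-2s}\bigl(\sup_{D_R(x)}\td{h}\bigr)^s \text{Vol}(D_R(x)).
\end{equation*}

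Since $v \ge 1$ on $\U$, one has $\td{h}_2 = v^{3/2}(2-v)^{-3/2} \ge 1$ pointwise; similarly $\td{h}_4 = \bigl((u+p)/(2-u)\bigr)^{3p/(p+2)} \ge (p/2)^{3p/(p+2)} > 0$ since $u \ge 0$. Dividing the combined estimate by $\td{h}^{s}(x)$, which is bounded below by a positive constant depending only on $p$ and can be absorbed into $C$, yields (\ref{es10}) and its $\td{h}_4$ analogue. The main obstacle is the bookkeeping of radii and thresholds: the mean value inequality requires subharmonicity on an ambient Euclidean half-ball while Theorem \ref{t2} is phrased for the intrinsic exhaustion, so the split between $D_{R/2}(x)$ and $D_R(x)$ is chosen precisely so that both ingredients apply on the appropriate domain.
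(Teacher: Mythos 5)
Your proposal is correct and follows essentially the route the paper intends (and only sketches, deferring details to \cite{x-y}): take $q=s$ in the computation preceding Theorem \ref{t2} so that $(|B|^2\td{h})^s$ is subharmonic, apply the Cheng-Li-Yau/Ni mean value inequality on the extrinsic ball, control the resulting integral by Theorem \ref{t2} with the extrinsic distance as exhaustion function and $\th=\f{1}{2}$, and absorb the positive lower bound of $\td{h}_2$ (resp. $\td{h}_4$) into the constant. The verification that $c_0=\f{4}{3}+\f{2}{3p}$ exceeds $\f{3}{2}-\f{1}{mn}$ precisely because $p\leq 4$ is the same point the paper relies on.
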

\bigskip\bigskip

\Section{Bernstein type theorems and related results}{Bernstein
type theorems and related results}
\medskip

If $M$ is a submanifold in $\ir{n+m}$, then the function $w$ defined on $\grs{n}{m}$ (see Section 2)
and the Gauss map $\g$ could be composed, yielding a smooth function on $M$, which is also denoted by $w$.
By studying the properties of $w$-function, we can obtain:

\begin{pro}\label{p1} \cite{x-y}
Let $M$ be a complete submanifold in $\ir{n+m}$. If the $w-$function
is bounded below by a positive constant $w_0$. Then $M$ is an entire
graph with Euclidean volume growth. Precisely,
\begin{equation}
\text{Vol}(D_R(x))\leq \f{1}{w_0}C(n)R^n.
\end{equation}
\end{pro}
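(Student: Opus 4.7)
The plan is to study the restriction to $M$ of the orthogonal projection $\pi:\ir{n+m}\to P_0\cong\R^n$ onto the reference $n$-plane. At each $p\in M$ the Jacobian of $\pi|_M$ is exactly $w(p)$, so the hypothesis $w\ge w_0>0$ says that $\pi|_M$ is everywhere a local diffeomorphism with uniformly controlled Jacobian. The strategy is first to upgrade this to the global statement that $\pi|_M:M\to\R^n$ is a diffeomorphism --- which is precisely the assertion that $M$ is an entire graph --- and then to read the volume estimate off a change of variables.

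First I would extract a uniform lower bound on the smallest singular value of $d\pi_p|_{T_pM}$. Since $\pi$ is $1$-Lipschitz in the ambient space, all singular values $\sigma_1\ge\cdots\ge\sigma_n$ of $d\pi_p|_{T_pM}$ satisfy $\sigma_i\le 1$; combined with $\prod_i\sigma_i=w(p)\ge w_0$ this forces $\sigma_n\ge w_0$. Equivalently, $w_0|v|\le|d\pi_p v|\le|v|$ for every $v\in T_pM$, so $\pi|_M$ is a bi-Lipschitz local diffeomorphism with constants depending only on $w_0$.

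Next I would upgrade the local diffeomorphism to a global one by a completeness argument. The two-sided estimate above says that the pulled-back metric $\td g=(\pi|_M)^* g_{\R^n}$ is uniformly equivalent to the induced metric on $M$; since $(M,g)$ is complete, so is $(M,\td g)$, and $\pi|_M:(M,\td g)\to(\R^n,g_{\R^n})$ is a local isometry between complete Riemannian manifolds. The classical covering theorem for such maps then implies that $\pi|_M$ is a Riemannian covering, and simple-connectedness of $\R^n$ makes it a global diffeomorphism. Equivalently, there is a smooth $f:\R^n\to\R^m$ with $M=\{(x,f(x)):x\in\R^n\}$.

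Finally, for the volume growth I would apply the area formula to $\pi|_M$. Since its Jacobian is $w$, one has $d\text{Vol}_M=w^{-1}(\pi|_M)^*d\xi$, where $d\xi$ is Lebesgue measure on $P_0\cong\R^n$. Because $\pi$ is $1$-Lipschitz on $\ir{n+m}$, the set $\pi(D_R(x))$ is contained in the Euclidean $R$-ball around $\pi(x)$ in $\R^n$, so
\begin{equation*}
\text{Vol}(D_R(x))=\int_{\pi(D_R(x))}w^{-1}\,d\xi\le w_0^{-1}\om_n R^n,
\end{equation*}
yielding the claim with $C(n)=\om_n$ equal to the volume of the Euclidean unit $n$-ball. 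The main obstacle is the second step: one has to combine completeness of $M$ with the uniform bi-Lipschitz bound to invoke the covering theorem and rule out any escape-to-infinity along lifted paths; the remaining steps are just a singular-value rearrangement and a routine change of variables.
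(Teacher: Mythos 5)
Your argument is correct and is essentially the argument this paper defers to its reference \cite{x-y}: the lower bound $w\geq w_0$ makes the projection onto $P_0$ a local diffeomorphism with two-sided bounds on $d\pi$ (singular values between $w_0$ and $1$), completeness then upgrades it to a covering of the simply connected $\R^n$, hence a global graph, and the change of variables $d\mbox{Vol}_M=w^{-1}(\pi|_M)^*d\xi$ together with $\pi(D_R(x))$ lying in an $R$-ball gives $\mbox{Vol}(D_R(x))\leq w_0^{-1}C(n)R^n$. No gaps; this matches the cited proof, so nothing further is needed.
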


Now we let $M$ be
 a complete minimal submanifold in $\R^{n+m}$ whose
Gauss image lies in $\{P\in \U\subset\grs{n}{m}:v(P)<2\}$. Then
$w=v^{-1}>\f{1}{2}$ on $M$ and Proposition \ref{p1} tells us $M$ is
an entire graph. Precisely, the immersion $F:M\to \ir{m+n}$ is
realized by a graph $(x, f(x))$ with $$f:\ir{n}\to\ir{m}.$$ At each
point in $M$ its image $n$-plane $P$ under the Gauss map is spanned
by
$$f_i=\ep_i+\pd{f^\a}{x^i}\ep_\a.$$
Hence the local coordinate of $P$ in $\U$ is
$$Z=\Big(\f{\p f^\a}{\p x^i}\Big).$$
By (\ref{eq3}),
$$v(P)=\left[\text{det}\left(\de_{ij}
     +\sum_\a\pd{f^\a}{x^i}\pd{f^\a}{x^j}\right)\right]^{\f{1}{2}}.$$
Hence
\begin{equation}\label{eq11}
\left[\text{det}\left(\de_{ij}+\sum_\a\pd{f^\a}{x^i}\pd{f^\a}{x^j}\right)\right]^{\f{1}{2}}<2
\end{equation}
at each $x\in \R^n$.
Conversely, if $M=(x,f(x))$ is a minimal graph given by
$f:\R^n\ra \R^m$ which satisfy (\ref{eq11}), then the Gauss image of $M$
lies in $\{P\in \U\subset\grs{n}{m}:v(P)<2\}$.

Let $P\in \U$ such that $u(P)=\sum_\a \tan^2 \th_\a<2$, then $\cos^2\th_\a=(1+\tan^2\th_\a)^{-1}>\f{1}{3}$ and
$$w(P)=\prod_{\a}\cos\th_\a>3^{-\f{p}{2}}.$$
Hence Proposition \ref{p1} could be applied when $M$ is a complete minimal submanifold in $\R^{n+m}$ whose Gauss image
lies in $\{P\in \U\subset\grs{n}{m}:v(P)<2\}$; which is hence a minimal graph given by $f:\R^n\ra \R^m$. Thereby (\ref{eq10}) shows
\begin{equation}
\sum_{i,\a}\left(\f{\p f^\a}{\p x^i}\right)^2<2.
\end{equation}
And vice versa.

Theorem \ref{t1} and Proposition \ref{p1} give us the following Bernstein-type theorem.

\begin{thm}\label{t6}
Let
 $M=(x,f(x))$ be an $n$-dimensional entire minimal graph given by $m$ functions
$f^\a(x^1,\cdots,x^n)$ with $m\geq 2, n\leq 4$. If
$$\De_f=\left[\text{det}\left(\de_{ij}+\sum_\a\pd{f^\a}{x^i}\pd{f^\a}{x^j}\right)\right]^{\f{1}{2}}<2$$
or
$$\La_f=\sum_{i,\a}\left(\f{\p f^\a}{\p x^i}\right)^2<2,$$
then $f^\a$ has to
be affine linear functions representing an affine $n$-plane.
\end{thm}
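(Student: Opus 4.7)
The plan is to reduce Theorem \ref{t6} to an application of the Schoen--Simon--Yau-type $L^s$ curvature estimate in Theorem \ref{t1} via a standard rescaling argument, paralleling what Section 6 already indicates for the $v$-case.

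First I would translate the graph hypothesis into a restriction on the Gauss image of $M$. Since the tangent $n$-plane at $(x,f(x))$ has local coordinate $Z=\big(\partial f^\alpha/\partial x^i\big)$ in $\U$, formulas (\ref{eq3}) and (\ref{eq10}) identify $v\circ\gamma$ with $\Delta_f$ and $u\circ\gamma$ with $\Lambda_f$ pointwise on $M$. Hence $\Delta_f<2$ means the Gauss image of $M$ lies in $\{P\in\U:v(P)<2\}$, while $\Lambda_f<2$ means it lies in $\{P\in\U:u(P)<2\}$. In either case the $w$-function on $M$ is bounded below by a positive constant: $w>1/2$ in the first case, and $w>3^{-p/2}$ in the second (exactly the calculation $u<2\Rightarrow\cos^2\theta_\alpha>1/3$ recorded in Section 6). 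Proposition \ref{p1} then applies and yields the Euclidean volume growth $\text{Vol}(D_R(x_0))\leq C(n)R^n$ for every $x_0\in M$ and $R>0$.

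Next I would apply Theorem \ref{t1} with the standard cut-off $\phi\in C_c^\infty(M_R)$ that equals $1$ on $M_{R/2}$ and satisfies $|\nabla\phi|\leq C/R$. Because $n\leq 4$ and the admissible range of exponents in (\ref{lp11}) has the form $[4,s_*)$ with $s_*>4$, I can pick some $s$ with $n<s<s_*$. Raising (\ref{lp11}) to the $s$-th power and combining with the volume bound gives
$$\int_{M_{R/2}}|B|^s*1\leq C\,R^{-s}\,\text{Vol}(M_R)\leq C'R^{n-s},$$
and letting $R\to\infty$ (with $\theta=1/2$ fixed) forces $|B|\equiv 0$ on $M$ by monotone convergence. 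Thus $M$ is totally geodesic, and being an entire graph this forces $f$ to be affine linear, which is the desired conclusion.

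The one delicate point, which I expect to be the main obstacle, is verifying that the upper endpoint $s_*$ in Theorem \ref{t1} strictly exceeds $n=4$ in the borderline case $n=4$. In the $v$-case this is immediate because the extra terms $\frac{4}{3p}+\frac{2}{3}\sqrt{(3+2/p)(6/(mn)+2/p)}$ are manifestly positive. In the $u$-case one must first invoke the hypothesis $p\leq 4$ of Theorem \ref{t1}, which holds automatically here since $p=\min(n,m)\leq n\leq 4$; then $\frac{10}{3}+\frac{8}{3p}\geq 4$ (with equality precisely at $p=4$), and the square-root correction is strictly positive because $\frac{12}{mn}+\frac{8}{p}-2>0$ whenever $p\leq 4$. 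This guarantees a choice of $s>n$ in both cases, and the argument closes.
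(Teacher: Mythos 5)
Your proposal is correct and follows essentially the same route as the paper: translate $\De_f<2$ (resp.\ $\La_f<2$) into the Gauss-image condition $v<2$ (resp.\ $u<2$), invoke Proposition \ref{p1} for Euclidean volume growth, and apply the Schoen--Simon--Yau-type estimate of Theorem \ref{t1} with an admissible exponent $s>n$ (the paper takes $s=4+\f{4}{3p}$), letting $R\to\infty$ to force $|B|\equiv 0$. Your explicit check that the upper endpoint exceeds $4$ in the $u$-case via $p=\min(n,m)\le n\le 4$ is precisely the detail the paper leaves to ``the proof is similar.''
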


\begin{proof} If $\De_f<2$, then
the Gauss image of $M$ is contained in $\{P\in \U\subset \grs{n}{m}:v(P)<2\}$. We choose
$$s=4+\f{4}{3p}>4.$$
Fix $x\in M$ and let $r$ be the Euclidean distance function from $x$
and $M_R=D_R(x)$. Hence, letting $R\to +\infty$ in (\ref{lp11})
yields
$$\big\||B|\big\|_{L^s(M)}=0.$$
i.e., $|B|=0$. $M$ has to be an affine linear subspace.

 For the case $\La_f<2$, the proof is similar.
\end{proof}

Theorem \ref{t3} and Proposition \ref{p1} yield Bernstein type
results as follows.
\begin{thm}\label{t4}
Let $M=(x,f(x))$ be an $n$-dimensional entire minimal graph given by
$m$ functions $f^\a(x^1,\cdots,x^n)$ with $m\geq 2$. If
$$\De_f=\left[\text{det}\left(\de_{ij}+\sum_\a\pd{f^\a}{x^i}\pd{f^\a}{x^j}\right)\right]^{\f{1}{2}}<2,$$
and
\begin{equation}
\left(2-\De_f\right)^{-1}=o(R^{\f{4}{3}}),
\end{equation}
where $R^2=|x|^2+|f|^2$.
Then $f^\a$ has to be affine linear functions and hence $M$ has to be
an affine linear subspace.
\end{thm}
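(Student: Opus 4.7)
The plan is to combine the Ecker--Huisken type pointwise curvature estimate of Theorem \ref{t3} with the Euclidean volume growth from Proposition \ref{p1}, and let the extrinsic ball radius $R\to\infty$: the growth hypothesis on $(2-\De_f)^{-1}$ is calibrated precisely so as to force $|B|\equiv 0$ in the limit.

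Since $\De_f<2$ on all of $\R^n$, the composed $v$--function satisfies $v\circ\g<2$ on $M$, so $w=v^{-1}>\f{1}{2}$ along $M$. By Proposition \ref{p1}, $M$ has Euclidean volume growth
$$\text{Vol}(D_R(x))\leq C(n)R^n \qquad \text{for every }x\in M,\ R>0.$$
Moreover, by (\ref{dh2}), the function $\td h_2=v^{\f{3}{2}}(2-v)^{-\f{3}{2}}\circ\g$ is positive on $M$ and satisfies the hypothesis (\ref{co4}) of Theorem \ref{t2}--Theorem \ref{t3} with constant $c_0=\f{3}{2}+\f{1}{3p}>\f{3}{2}-\f{1}{mn}$. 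Hence for any integer $s\geq C_1(n,m)$, any $x_0\in M$ and any $R>0$, Theorem \ref{t3} gives
$$|B|^{2s}(x_0)\leq C(n,s)R^{-(n+2s)}\bigl(\sup_{D_R(x_0)}\td h_2\bigr)^s\text{Vol}(D_R(x_0)),$$
which, after inserting the volume bound and extracting $s$--th roots, simplifies to
$$|B|^2(x_0)\leq C'(n,m,s)\,R^{-2}\sup_{D_R(x_0)}\td h_2.$$

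Now fix $x_0=(x,f(x))\in M$ and set $R_0=(|x|^2+|f(x)|^2)^{\f{1}{2}}$. For any $y=(y',f(y'))\in D_R(x_0)=B_R(x_0)\cap M$, the triangle inequality in $\R^{n+m}$ gives $(|y'|^2+|f(y')|^2)^{\f{1}{2}}\leq R_0+R$. Since $1\leq v<2$ yields $\td h_2\leq 2^{\f{3}{2}}(2-\De_f)^{-\f{3}{2}}$ on $M$, the growth hypothesis $(2-\De_f)^{-1}=o(R^{\f{4}{3}})$ implies
$$\sup_{D_R(x_0)}\td h_2=o\bigl((R_0+R)^2\bigr) \qquad \text{as }R\to\infty.$$
Combining,
$$|B|^2(x_0)\leq C'(n,m,s)\,R^{-2}\cdot o\bigl((R_0+R)^2\bigr).$$
Since $R^{-2}(R_0+R)^2\to 1$ as $R\to\infty$ with $R_0$ fixed, the right-hand side tends to $0$. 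Therefore $|B|(x_0)=0$, and as $x_0\in M$ was arbitrary, $M$ is totally geodesic; being an entire graph, it must be an affine $n$--plane.

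The one point that requires care is justifying $\sup_{D_R(x_0)}\td h_2=o((R_0+R)^2)$ from the pointwise little--$o$ hypothesis, because the hypothesis only constrains points with large ambient radius. The standard splitting resolves this: for any prescribed $\ep>0$, pick $T$ so large that $(2-\De_f)^{-\f{3}{2}}(y)<\ep\,R(y)^2$ whenever $R(y)=(|y'|^2+|f(y')|^2)^{\f{1}{2}}>T$; on the compact piece $\{R(y)\leq T\}$, $(2-\De_f)^{-\f{3}{2}}$ is bounded by some constant $C_T$; hence $\sup_{D_R(x_0)}\td h_2\leq 2^{\f{3}{2}}\max\{C_T,\ep(R_0+R)^2\}$, and for $R$ large the second term dominates. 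Everything else in the argument is a mechanical packaging of Theorem \ref{t3}, Proposition \ref{p1} and the differential inequality (\ref{dh2}) already established in Section 4.
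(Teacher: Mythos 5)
Your proposal is correct and follows essentially the same route as the paper: bound $\td h_2\leq C(2-\De_f)^{-\f{3}{2}}$, invoke the pointwise estimate of Theorem \ref{t3} together with the volume growth of Proposition \ref{p1}, and let $R\to\infty$ using the hypothesis $(2-\De_f)^{-1}=o(R^{\f{4}{3}})$ to force $|B|\equiv 0$. Your extra care in distinguishing the extrinsic ball radius from the ambient radius $R^2=|x|^2+|f|^2$ (via the triangle inequality and the splitting argument for the supremum) only fills in details the paper leaves implicit.
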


\begin{thm}\label{t5}
Let $M=(x,f(x))$ be an $n$-dimensional entire minimal graph given by
$m$ functions $f^\a(x^1,\cdots,x^n)$ with $p=\min\{n,m\}\leq 4$. If
$$\La_f=\sum_{i,\a}\left(\f{\p f^\a}{\p x^i}\right)^2<2,$$
 and
\begin{equation}
\left(2-\La_f\right)^{-1}=o(R^\f{2(p+2)}{3p})
\end{equation}
where $R^2=|x|^2+|f|^2$.
Then $f^\a$ has to be affine linear functions and hence $M$ has to be
an affine linear subspace.
\end{thm}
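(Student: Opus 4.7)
The plan is to mimic the proof of Theorem~\ref{t4} (using $\Delta_f$ and $\tilde h_2$) but replace the $v$-function machinery with the $u$-function machinery of Theorem~\ref{t3}. The decisive observation is (\ref{eq10}), which says $u\circ\gamma = \tr(ZZ^T) = \sum_{i,\alpha}(\partial f^\alpha/\partial x^i)^2 = \Lambda_f$. So the hypothesis $\Lambda_f<2$ is precisely the statement that the Gauss image of $M$ lies in $\{P\in\U:u(P)<2\}$.

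First, I would verify that $M$ is realized as an entire graph with Euclidean volume growth. At any point of the Gauss image we have $u=\sum_\alpha\tan^2\theta_\alpha<2$, hence $\cos^2\theta_\alpha>\tfrac13$ and $w=\prod_\alpha\cos\theta_\alpha>3^{-p/2}$. Proposition~\ref{p1} then yields $\mathrm{Vol}(D_R(x))\leq C(n,p)R^n$ for every extrinsic ball, and the graph representation follows as in the discussion preceding Theorem~\ref{t6}.

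Second, I would invoke the Ecker--Huisken type curvature estimate in Theorem~\ref{t3}. Because $p\leq 4$ and the Gauss image lies in $\{u<2\}$, the function $\tilde h_4 = (u+p)^{3p/(p+2)}(2-u)^{-3p/(p+2)}$ satisfies (\ref{co4}) on $M$, and Theorem~\ref{t3} gives
\begin{equation*}
|B|^{2s}(x)\leq C(n,s)R^{-(n+2s)}\bigl(\sup_{D_R(x)}\tilde h_4\bigr)^s\,\mathrm{Vol}(D_R(x))
\end{equation*}
for every $s\geq C_2(n,m)$. Since $u+p<p+2$ on $\{u<2\}$, the factor $(u+p)^{3p/(p+2)}$ is uniformly bounded, so
\begin{equation*}
\sup_{D_R(x)}\tilde h_4\leq C(p)\bigl(\sup_{D_R(x)}(2-\Lambda_f)^{-1}\bigr)^{3p/(p+2)}.
\end{equation*}

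Third, I would plug in the growth assumption. Fix $x\in M$ (say the origin) and note that every $y\in D_R(x)$ has extrinsic distance at most $R$ from $x$, hence $R(y)\leq R+|x|$. The hypothesis $(2-\Lambda_f)^{-1}=o(R^{2(p+2)/(3p)})$ therefore gives $\sup_{D_R(x)}(2-\Lambda_f)^{-1} = o(R^{2(p+2)/(3p)})$, so
\begin{equation*}
\sup_{D_R(x)}\tilde h_4 = o\!\left(R^{\tfrac{3p}{p+2}\cdot\tfrac{2(p+2)}{3p}}\right)=o(R^2).
\end{equation*}
Combining this with the volume bound $\mathrm{Vol}(D_R(x))\leq CR^n$ in the curvature estimate yields $|B|^{2s}(x) = o(R^{-(n+2s)}\cdot R^{2s}\cdot R^n) = o(1)$ as $R\to\infty$, so $|B|(x)=0$. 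Since $x$ was arbitrary, $M$ is totally geodesic, hence an affine $n$-plane, and the $f^\alpha$ are affine linear.

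The only delicate point is the exponent matching in the third step: the growth rate $\tfrac{2(p+2)}{3p}$ in the hypothesis is chosen exactly so that the power of $R$ absorbed by $(\sup\tilde h_4)^s$ cancels against $R^{-2s}$, leaving only the $o(1)$ factor from the little-$o$ assumption. All other steps are direct invocations of results already proved in the paper.
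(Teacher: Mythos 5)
Your proposal is correct and follows exactly the route the paper intends: the paper proves Theorem \ref{t4} via $\tilde h_2$, Theorem \ref{t3} and Proposition \ref{p1}, and states that Theorem \ref{t5} is proved "similarly," which is precisely your substitution of $u\circ\gamma=\Lambda_f$, the bound $w>3^{-p/2}$, and $\tilde h_4\leq C(p)(2-u)^{-3p/(p+2)}$ with the exponent cancellation $\f{3p}{p+2}\cdot\f{2(p+2)}{3p}=2$. Your handling of the little-$o$ hypothesis over $D_R(x)$ is if anything slightly more careful than the paper's own write-up.
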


\begin{proof}

Here we only give the proof of Theorem \ref{t4}, for the proof of Theorem \ref{t5} is similar.

From (\ref{h2}), it is easily seen that
$$h_2\leq C(2-v)^{-\f{3}{2}},$$
where $C$ is a positive constant. Thus, for any point $q\in M$, by
Theorem \ref{t3} and Proposition \ref{p1}, we have
$$|B|^{2s}(q)\le C(n,
s)R^{-2s}\left(2-v\circ\g\right)^{-\f{3}{2}s}$$ Letting $R\to
+\infty$ in the above inequality forces $|B(q)| =0$.

\end{proof}

\begin{rem}

If $n=2$ or $3$, the conclusion of Theorem \ref{t6}-\ref{t5}
could be inferred from the work done by Chern-Osserman
\cite{c-o}, Babosa \cite{ba} and Fischer-Colbrie \cite{f-c}.

\end{rem}

From (\ref{lp10}) it is easy to obtain the following result.

\begin{thm}
Let $M=(x,f(x))$ be an $n$-dimensional entire minimal graph given by
$m$ functions $f^\a(x^1,\cdots,x^n)$. Assume $M$ has finite total
curvature. If $\De_f<2$, or $p\leq 4$ and $\La_f<2$,
 then $M$ has to be an affine linear
subspace.
\end{thm}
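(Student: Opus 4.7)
The plan is to apply the $L^4$--$L^2$ stability-type inequality (\ref{lp10}) with $q=0$ to a standard geodesic cutoff centred at a base point, and then let the radius tend to infinity using the finite total curvature assumption $\int_M|B|^2*1<\infty$.

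First, I would verify that the pointwise hypothesis places the Gauss image in the appropriate region and guarantees Euclidean volume growth. If $\Delta_f<2$ then by (\ref{eq11}) the composition $v\circ\gamma$ is strictly less than $2$ everywhere on $M$, while if $\Lambda_f<2$ then $u\circ\gamma<2$ by (\ref{eq10}). In either case the $w$-function on $M$ has a uniform positive lower bound: $w=v^{-1}>\frac12$ in the first case, and $w=\prod_\alpha\cos\theta_\alpha>3^{-p/2}$ in the second, since $u<2$ forces $\cos^2\theta_\alpha>\frac13$ for each Jordan angle. Proposition~\ref{p1} then applies and yields the Euclidean volume growth $\mathrm{Vol}(D_R(x_0))\le CR^n$.

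Next, I would invoke Lemma~\ref{l4} to obtain the spectral inequality (\ref{lp1}) with $\lambda=\frac32+\frac1p$ in the $v$-case and with $\lambda=\frac2p+1$ in the $u$-case. Both values exceed $\frac32(1-\frac2{mn})$: the first trivially, and the second because the standing restriction $p\le4$ forces $\frac2p\ge\frac12$. Hence the derivation of (\ref{lp10}) in Section~5 goes through with $q=0$, producing
\begin{equation*}
\int_M |B|^4\,\phi^2 *1 \le C\int_M |B|^2\,|\n\phi|^2 *1
\end{equation*}
for every compactly supported Lipschitz $\phi$ on $M$.

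Finally, fix $x_0\in M$ and plug in the standard cutoff $\phi\equiv 1$ on $D_{R/2}(x_0)$, $\phi\equiv 0$ off $D_R(x_0)$, with $|\n\phi|\le C/R$. This gives
\begin{equation*}
\int_{D_{R/2}(x_0)}|B|^4 *1\le \frac{C}{R^{2}}\int_M |B|^2 *1.
\end{equation*}
By the finite total curvature hypothesis the right-hand side tends to $0$ as $R\to\infty$, and exhausting $M$ by the balls $D_{R/2}(x_0)$ forces $|B|\equiv 0$, so $M$ is totally geodesic, hence an affine $n$-plane, and each $f^\alpha$ is affine linear. The only point requiring care is Step~1 above (reading the hypothesis $\Delta_f<2$ or $\Lambda_f<2$ as the Gauss-image condition together with a uniform lower bound on $w$, and then checking the inequality $\lambda>\frac32(1-\frac2{mn})$ that legitimises (\ref{lp10})); once these are in hand, the cutoff/exhaustion step is immediate.
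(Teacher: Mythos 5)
Your argument is correct and is exactly the route the paper intends: the paper proves this theorem simply by remarking that it follows from (\ref{lp10}), and your proof fills in that same route (Gauss-image condition via (\ref{eq3}), (\ref{eq10}), Lemma \ref{l4} giving $\la=\f32+\f1p$ resp.\ $\la=1+\f2p$ with the $p\le 4$ check, then (\ref{lp10}) with $q=0$ and a cutoff, letting $R\to\infty$ using $\int_M|B|^2*1<\infty$). No gaps; the admissibility of $q=0$ and the inequality $\la>\f32(1-\f2{mn})$ are verified just as needed.
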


There are other applications of the strong stability
inequalities (\ref{es8}), besides its key role in S-S-Y's
estimates. We state following results,
whose detail proof can be found in the previous paper of the first
author \cite{x3}.

\begin{thm}\label{van}
Let $M=(x,f(x))$ be an $n$-dimensional entire minimal graph given by
$m$ functions $f^\a(x^1,\cdots,x^n)$. If $\De_f<2$ or $\La_f<2$,
then any $L^2$-harmonic $1$-form vanishes.
\end{thm}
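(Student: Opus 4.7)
The plan is to combine the Bochner formula for harmonic $1$-forms with a refined Kato inequality, and then feed the resulting differential inequality into the strong stability inequality (\ref{es8}) from Lemma \ref{l4}. The hypothesis $\Delta_f<2$ (resp.\ $\Lambda_f<2$) forces the Gauss image of $M$ into $\{v<2\}$ (resp.\ $\{u<2\}$), so (\ref{es8}) is available with constant $\lambda=\tfrac{3}{2}+\tfrac{1}{p}$ (resp.\ $\lambda=1+\tfrac{2}{p}$). In either case $\lambda>1$, and this is precisely the surplus we need.

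First, for an $L^{2}$-harmonic $1$-form $\omega$ on $M$, the Bochner formula reads $\tfrac{1}{2}\Delta|\omega|^{2}=|\nabla\omega|^{2}+\mathrm{Ric}(\omega^{\sharp},\omega^{\sharp})$. Since $M$ is a minimal submanifold of $\mathbb{R}^{n+m}$, the Gauss equation gives $\mathrm{Ric}\geq-|B|^{2}$. Combined with the refined Kato inequality $|\nabla\omega|^{2}\geq\frac{n}{n-1}\bigl|\nabla|\omega|\bigr|^{2}$ for harmonic $1$-forms, this yields
\begin{equation*}
|\omega|\,\Delta|\omega|\;\geq\;\tfrac{1}{n-1}\bigl|\nabla|\omega|\bigr|^{2}-|B|^{2}|\omega|^{2}.
\end{equation*}
Multiplying by $\phi^{2}$ with $\phi\in C_{c}^{\infty}(M)$ and integrating by parts produces a first inequality relating $\int|B|^{2}|\omega|^{2}\phi^{2}$, $\int\bigl|\nabla|\omega|\bigr|^{2}\phi^{2}$ and the cross term $\int|\omega|\phi\,\nabla|\omega|\cdot\nabla\phi$.

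Next, I apply (\ref{es8}) with the test function $|\omega|\phi$ to obtain a second inequality of the same shape, but with the $|B|^{2}|\omega|^{2}\phi^{2}$ term on the opposite side and multiplied by $\lambda$. A linear combination of the two inequalities, together with Young's inequality on the cross term, absorbs the gradient terms and yields
\begin{equation*}
\int_{M}|B|^{2}|\omega|^{2}\phi^{2}\,*1+\int_{M}\bigl|\nabla|\omega|\bigr|^{2}\phi^{2}\,*1\;\leq\;C(n,p,\lambda)\int_{M}|\omega|^{2}|\nabla\phi|^{2}\,*1.
\end{equation*}

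Finally, I take $\phi$ to be the standard cutoff that equals $1$ on the intrinsic ball $D_{R}(x_{0})$, vanishes outside $D_{2R}(x_{0})$, and satisfies $|\nabla\phi|\leq C/R$. The right-hand side is then controlled by $R^{-2}\int_{M\setminus D_{R}(x_{0})}|\omega|^{2}$, which tends to zero as $R\to\infty$ by the $L^{2}$ hypothesis on $\omega$. Passing to the limit forces $\nabla|\omega|\equiv 0$ and $|B||\omega|\equiv 0$ on all of $M$; since $M$ is non-compact with infinite volume (being an entire graph with Euclidean volume growth by Proposition \ref{p1}), a nonzero constant $|\omega|$ cannot be $L^{2}$, so $\omega\equiv 0$.

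The main technical point is the balancing of constants when combining the Bochner-Kato inequality with (\ref{es8}): one needs $\lambda+\tfrac{1}{n-1}>1$ with strict inequality so that after Young's inequality the coefficient of $\int\bigl|\nabla|\omega|\bigr|^{2}\phi^{2}$ is strictly positive. Since $\lambda\geq 1+\tfrac{2}{p}>1$ in both alternatives, this is automatic, and no dimensional restriction on $n$ is needed. The argument is structurally identical to the one carried out by the first author in \cite{x3}, so I would only sketch the absorption computation and refer to that paper for the routine details.
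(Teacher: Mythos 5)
Your proposal is correct and is essentially the argument the paper has in mind: the paper itself only cites the strong stability inequality (\ref{es8}) and refers to \cite{x3} for details, and the proof there is exactly this combination of the Bochner--Weitzenb\"ock formula, the refined Kato inequality, the Gauss equation bound $\mathrm{Ric}\geq -|B|^2$, and (\ref{es8}) applied to $|\omega|\phi$ with a cutoff and the $L^2$ hypothesis. The only minor quibble is the stated threshold $\lambda+\tfrac{1}{n-1}>1$ (the natural condition from your combination is $\lambda\,\tfrac{n}{n-1}>1$), but since $\lambda>1$ in both cases this is immaterial.
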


\begin{thm} Let $M$ be one as in Theorem \ref{van}, $N$ be a manifold
with non-positive sectional curvature. Then any harmonic map $f:M\to
N$ with finite energy has to be constant.
\end{thm}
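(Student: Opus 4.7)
The plan is to play the strong stability inequality (\ref{es8}) against a Bochner-type inequality for $v=|dF|$, where $F:M\to N$ is the harmonic map. Under either hypothesis of Theorem \ref{van} the Gauss image of $M$ lies in the appropriate sublevel set of $v$ or $u$, so Lemma \ref{l4} supplies (\ref{es8}) with $\la=\f{3}{2}+\f{1}{p}$ or $\la=1+\f{2}{p}$; in either case $\la>1$, which is the single arithmetic fact the argument relies on.

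Since $M\subset\R^{n+m}$ is minimal, the Gauss equation gives $\text{Ric}^M(X,X)\geq-|B|^2|X|^2$. Combined with the harmonic map Bochner formula, the non-positivity of the sectional curvature of $N$, and the Kato inequality $|\n dF|^2\geq|\n v|^2$, this produces (weakly on $M$)
$$\De v\geq -|B|^2 v.$$
Let $\eta$ be a smooth compactly supported cut-off on $M$. Testing the displayed inequality against $v\eta^2$ and integrating by parts yields an upper bound
$$\int_M |\n v|^2\eta^2\,*1+2\int_M v\eta\,\n v\cdot\n\eta\,*1\leq \int_M|B|^2 v^2\eta^2\,*1,\qquad (\ast)$$
while applying (\ref{es8}) to $\phi=v\eta$ and expanding $|\n(v\eta)|^2$ gives a matching lower bound
$$\la\int_M|B|^2 v^2\eta^2\,*1\leq \int_M|\n v|^2\eta^2\,*1+2\int_M v\eta\,\n v\cdot\n\eta\,*1+\int_M v^2|\n\eta|^2\,*1.\qquad (\ast\ast)$$
Substituting $(\ast)$ into the right-hand side of $(\ast\ast)$ produces the key estimate $(\la-1)\int_M|B|^2 v^2\eta^2\,*1\leq \int_M v^2|\n\eta|^2\,*1$, and feeding this back into $(\ast)$ with Young's inequality absorbing the cross term yields $\int_M|\n v|^2\eta^2\,*1\leq C(\la)\int_M v^2|\n\eta|^2\,*1$.

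Finally, take the standard cut-off with $\eta\equiv 1$ on $D_R(x_0)$, $|\n\eta|\leq C/R$, and support in $D_{2R}(x_0)$. The finite energy hypothesis $\int_M v^2\,*1=2E(F)<\infty$ forces the right-hand side to $0$ as $R\to\infty$, so $\n v\equiv 0$ and $v$ is constant on $M$. Since $M=(x,f(x))$ is an entire graph, $\text{Vol}(M)=\infty$; combined with $\int_M v^2<\infty$ this forces $v\equiv 0$, i.e., $F$ is constant. The only delicate point is justifying $\De v\geq-|B|^2 v$ across the zero set of $v$; this is handled by the standard smoothing $v_\ep=\sqrt{v^2+\ep^2}$, $\ep\to 0^+$, or by first working with the smooth function $|dF|^2$ and invoking the refined Kato inequality.
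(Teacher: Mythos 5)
Your proposal is correct, and it follows essentially the route the paper intends: the paper gives no proof of this theorem, deferring to \cite{x3}, where the argument is exactly the Schoen--Yau-type combination you use --- the strong stability inequality (\ref{es8}) with $\la=\f{3}{2}+\f{1}{p}$ or $1+\f{2}{p}>1$, the harmonic-map Bochner formula with $\mbox{Ric}^M\geq -|B|^2$ from the Gauss equation, the Kato inequality, and a cut-off/finite-energy argument, with infinite volume of the entire graph forcing the constant $|dF|$ to vanish. Your handling of the only delicate point (the weak inequality across the zero set of $dF$, via $\sqrt{|dF|^2+\ep^2}$ or working with the smooth $|dF|^2$) is the standard and adequate fix.
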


\bibliographystyle{amsplain}

\begin{thebibliography}{10}

\bibitem{ba} J.L.M.Babosa: An extrinsic rigidity theorem for minimal
immersion from $S^2$ into $S^n$. J. differntial Geometry {\bf 14(3)}
(1980), 355-368.

\bibitem{b} S.~Bernstein:
Sur un th\'{e}or\`{e}me de g\'{e}om\'{e}trie et ses application aux
\'{e}quations aux d\'{e}riv\'{e}s partielles du type elliptique.
Comm. de la Soc Math. de Kharkov (2\'{e} s\'{e}r.) {\bf 15}
(1915-1917), 38-45.

\bibitem{b-d-g} E.~Bombieri, E. De Giorgi and E. Guusti:
Minimal cones and Bernstein problem. Invent. Math. {\bf
7}(1969), 243-268.

\bibitem{d} Yuxin Dong: On graphic submanifolds with parallel mean
curvature in Euclidean space. Preprint.

\bibitem{ch}S. S. Chern: On the curvature of a piece of
hypersurfaces in Euclidean space, Abh. Math. Sem. Univ. Hamberg
{\bf 29} (1965), 77-91.

\bibitem{c-o} S. S. Chern and R. Osserman: Complete minimal surfaces
in Euclidean $n-$space. J. d'Anal. Math. {\bf 19}(1967), 15-34.
\bibitem{c-x} Qing Chen and Senlin Xu:
Rigidity of compact minimal submanifolds in a unit sphere. Geom.
Dedicata {\bf 45 (1)}(1993), 83-88.

\bibitem{c-l-y}S.~Y.~Cheng, P.~Li  and S.~T.~Yau:
Heat equations on minimal submanifols and their applications. Amer.
J. Math. {\bf  103}(1981), 1021-1063.

\bibitem{e-h} K.~Ecker  and  G.~Huisken:
A Bernstein result for minimal graphs of controlled growth. J. Diff.
Geom. {\bf 31}(1990), 397-400.

\bibitem{f-c} D.~Fischer-Colbrie:
Some rigidity theorems for minimal submanifolds of the sphere. Acta math.
{\bf 145}(1980), 29-46.

\bibitem{h-j-w}S.~Hildebrandt, J.~Jost,~J and K.~O.~Widman:
Harmonic mappings and minimal submanifolds.  Invent. math. {\bf 62}
(1980), 269-298.

\bibitem{j-x} J.~Jost and  Y.~L.~Xin:
Bernstein type theorems for higher codimension. Calculus. Var. PDE
{\bf 9} (1999), 277-296.

\bibitem{l-o} H. B.~Lawson and R.~Osserman:
Non-existence, non-uniqueness and irregularity of solutions to the
minimal surface system. Acta math. {\bf 139}(1977), 1-17.

\bibitem{l-l}An-Min Li and Jimin Li:
An intrinsic rigidity theorem for minimal submanifolds in a sphere.
Arch. Math.{\bf 58} (1992), 582-594.

\bibitem{mo} J.~Moser:
On Harnack's theorem for elliptic differential equations.
 Comm. Pure Appl. Math. {\bf 14} (1961), 577-591.

\bibitem{n} Lei Ni:  Gap theorems for minimal submanifolds in
$\ir{n+1}$. Comm. Analy.  Geom. {\bf 9 (3)}(2001), 641-656.


\bibitem{r-v} E. A. Ruh and J. Vilms: The tension field of Gauss map.
Trans. Amer. Math. {\bf 149}(1970), 569-573.


\bibitem{s-s-y} R.~ Schoen, L.~ Simon and S.~ T.~ Yau:
Curvature estimates for minimal hypersurfaces. Acta Math. {\bf 134}
(1975), 275-288.

\bibitem{si} J. Simons:
Minimal varieties in Riemannian manifolds. Ann. Math. {\bf 88}
(1968), 62-105.

\bibitem{s-w-x}K.~Smoczyk, Guofang Wang and Y.~L.~Xin:
Bernstein type theorems with flat normal bundle. Calc. Var. and PDE.
{\bf 26(1)}(2006), 57-67.

\bibitem{x1} Y. L. Xin: Geometry of harmonic maps.
Birkh\"auser PNLDE 23, (1996).


\bibitem{x2} Yuanlong Xin:
Minimal submanifolds and related topics. World Scientific Publ.,
(2003).

\bibitem{x3} Y.~L.~Xin:
Bernstein type theorems without graphic condition. Asia J. Math.
{\bf 9(1)}(2005), 31-44.

\bibitem{x-y} Y.~L.~Xin and Ling Yang:
Curvature estimates for minimal submanifolds of higher codimension.
arXiv:0709.3686.





\end{thebibliography}

\end{document}